\journal{Computer Methods in Applied Mechanics and Engineering}
\begin{document}
\begin{frontmatter}

\title{Adaptive-Sparse Polynomial Dimensional Decomposition Methods for
High-Dimensional Stochastic Computing}

\author{Vaibhav Yadav}
\ead{vaibhav-yadav@uiowa.edu}

\author{Sharif Rahman\corref{cor1}\fnref{fn1}}
\ead{rahman@engineering.uiowa.edu}

\cortext[cor1]{Corresponding author.}
\fntext[fn1]{Professor.}
\tnotetext[t1]{Grant sponsor: U.S. National Science Foundation; Grant Nos. CMMI-0653279, CMMI-1130147.}

\address{College of Engineering, The University of Iowa, Iowa City, Iowa 52242, U.S.A.}
\begin{abstract}
This article presents two novel adaptive-sparse polynomial dimensional
decomposition (PDD) methods for solving high-dimensional uncertainty
quantification problems in computational science and engineering.
The methods entail global sensitivity analysis for retaining important PDD component functions,
and a full- or sparse-grid dimension-reduction integration or quasi
Monte Carlo simulation for estimating the PDD expansion coefficients.
A unified algorithm, endowed with two distinct ranking schemes for grading component functions, was created for their numerical implementation. The fully adaptive-sparse PDD method is comprehensive and rigorous,
leading to the second-moment statistics of a stochastic response that
converges to the exact solution when the tolerances vanish. A partially
adaptive-sparse PDD method, obtained through regulated adaptivity
and sparsity, is economical and is, therefore, expected to solve practical
problems with numerous variables. Compared with past developments,
the adaptive-sparse PDD methods do not require its truncation parameter(s)
to be assigned \emph{a priori} or arbitrarily. The numerical results
reveal that an adaptive-sparse PDD method achieves a desired level
of accuracy with considerably fewer coefficients compared with existing
PDD approximations. For a required accuracy in calculating the probabilistic response characteristics, the new bivariate adaptive-sparse PDD method is more
efficient than the existing bivariately truncated PDD method by almost
an order of magnitude. Finally, stochastic dynamic analysis of a disk
brake system was performed, demonstrating the ability of the new methods
to tackle practical engineering problems.
\end{abstract}

\begin{keyword}

ANOVA, HDMR, PDD, stochastic dynamics, uncertainty quantification

\end{keyword}

\end{frontmatter}

\newtheorem{thm}{Theorem}
\newtheorem{pro}{Proposition}
\newdefinition{rmk}{Remark}

\section{Introduction}

\label{intro} Uncertainty quantification, an emerging multidisciplinary
field blending physical and mathematical sciences, characterizes the
discrepancy between model-based simulations and physical reality in
terms of the statistical moments, probability law, and other relevant
properties of a complex system response. For practical applications,
encountering a large number of input random variables is not uncommon,
where an output function of interest, defined algorithmically via
expensive finite-element analysis (FEA) or similar numerical calculations,
is all too often expensive to evaluate. The most promising stochastic
methods available today are perhaps the collocation \cite{deb01,zabaras07} and
polynomial chaos expansion (PCE) \cite{ghanem91,xiu02} methods, including sparse-grid techniques \cite{holtz08}, which have found many successful applications. However,
for truly high-dimensional systems, they require astronomically large
numbers of terms or coefficients, succumbing to the curse of dimensionality
\cite{bellman57}. Therefore, alternative computational methods capable of exploiting
low effective dimensions of multivariate functions, such as the polynomial
dimensional decomposition (PDD) method, are most desirable. Readers,
not familiar with but interested in PDD, are referred to the authors'
past works \cite{rahman08,rahman09,rahman10,rahman11}.

For practical applications, the PDD must be truncated with respect
to $S$ and $m$, where $S$ and $m$ define the largest degree of interactions among
input variables and largest order of orthogonal polynomials, respectively, retained
in a concomitant approximation. These truncation parameters depend
on the dimensional structure and nonlinearity of a stochastic response.
The higher the values of $S$ and $m$, the higher the accuracy, but
also the computational cost that is endowed with an $S$th- or $m$th-order
polynomial computational complexity. However, the dimensional hierarchy
or nonlinearity, in general, is not known \emph{a priori}. Therefore,
indiscriminately assigning the truncation parameters is not desirable,
nor is it possible to do so when a stochastic solution is obtained
via complex numerical algorithms. In which case, one must perform
these truncations automatically by progressively drawing in higher-variate
or higher-order contributions as appropriate. Furthermore, all $S$-variate
component functions of PDD may not contribute equally or even appreciably
to be considered in the resulting approximation. Hence, a sparse approximation,
expelling component functions with negligible contributions, should
be considered as well.

Addressing some of the aforementioned concerns have
led to adaptive versions of the cut-high-dimensional model representation
(cut-HDMR) \cite{zabaras10} and the anchored decomposition \cite{karniadakis12}, employed
in conjunction with the sparse-grid collocation methods, for solving
stochastic problems in fluid dynamics. Several adaptive variants of the PCE \cite{blatman08, li98, wan05} method have also appeared. It is important to clarify
that the cut-HDMR and anchored decompositions are the same as the
referential dimensional decomposition (RDD) \cite{rahman11c, rahman11b}. Therefore, both adaptive methods essentially
employ RDD for multivariate function approximations, where the mean
values of random input are treated as the reference or anchor point
$-$ a premise originally proposed by Xu and Rahman \cite{xu05}. The
developments of these adaptive methods were motivated by the fact
that an RDD approximation requires only function evaluations, as opposed
to high-dimensional integrals required for an ANOVA Dimensional Decomposition (ADD) approximation. However,
a recent error analysis \cite{rahman11b} reveals sub-optimality of RDD approximations,
meaning that an RDD approximation, regardless of how the reference
point is chosen, cannot be better than an ADD approximation for identical
degrees of interaction. The analysis also finds ADD approximations
to be exceedingly more precise than RDD approximations at higher-variate
truncations. In addition, the criteria implemented in existing adaptive
methods are predicated on retaining higher-variate component functions
by examining the second-moment properties of only univariate component
functions, where the largest degree of interaction and polynomial
order in the approximation are still left to the user's discretion,
instead of being determined automatically based on the problem being
solved. Therefore, more intelligently derived adaptive-sparse approximations
and decompositions rooted in ADD or PDD should be explored by developing
relevant criteria and acceptable error thresholds. These enhancements,
some of which are indispensable, should be pursued without sustaining
significant additional cost.

This paper presents two new adaptive-sparse versions of the PDD method -- the fully adaptive-sparse PDD method and a partially adaptive-sparse PDD method --
for solving high-dimensional stochastic problems commonly encountered
in computational science and engineering. The methods are based on
(1) variance-based global sensitivity analysis for defining the pruning
criteria to retain important PDD component functions; (2) a full-
or sparse-grid dimension-reduction integration or quasi Monte Carlo
simulation (MCS) for estimating the PDD expansion coefficients. Section
2 briefly describes existing dimensional decompositions, including
PDD and its $S$-variate, $m$th-order approximation, to be contrasted
with the proposed methods. Two adaptive-sparse PDD methods are formally
presented in Section 3, along with a computational algorithm and a
flowchart for numerical implementation of the method. Two different
approaches for calculating the PDD coefficients, one emanating
from dimension-reduction integration and the other employing quasi
MCS, are explained in Section 4. Section 5 presents
three numerical examples for probing the accuracy, efficiency, and
convergence properties of the proposed methods, including a comparison
with the existing PDD methods. Section 6 reports a large-scale stochastic
dynamics problem solved using a proposed adaptive-sparse method. Finally,
conclusions are drawn in Section 7.

\section{Dimensional Decompositions}

Let $\mathbb{N}$, $\mathbb{N}_{0}$, $\mathbb{R}$, and $\mathbb{R}_{0}^{+}$
represent the sets of positive integer (natural), non-negative integer,
real, and non-negative real numbers, respectively. For $k\in\mathbb{N}$,
denote by $\mathbb{R}^{k}$ the $k$-dimensional Euclidean space,
by $\mathbb{N}_{0}^{k}$ the $k$-dimensional multi-index space, and
by $\mathbb{R}^{k\times k}$ the set of $k\times k$ real-valued matrices.
These standard notations will be used throughout the paper.

Let $(\Omega,\mathcal{F},P)$ be a complete probability space, where
$\Omega$ is a sample space, $\mathcal{F}$ is a $\sigma$-field on
$\Omega$, and $P:\mathcal{F}\to[0,1]$ is a probability measure.
With $\mathcal{B}^{N}$ representing the Borel $\sigma$-field on
$\mathbb{R}^{N}$, $N\in\mathbb{N}$, consider an $\mathbb{R}^{N}$-valued
random vector $\mathbf{X}:=(X_{1},\cdots,X_{N}):(\Omega,\mathcal{F})\to(\mathbb{R}^{N},\mathcal{B}^{N})$,
which describes the statistical uncertainties in all system and input
parameters of a high-dimensional stochastic problem. The probability
law of $\mathbf{X}$ is completely defined by its joint probability
density function $f_{\mathbf{X}}:\mathbb{R}^{N}\to\mathbb{R}_{0}^{+}$.
Assuming independent coordinates of $\mathbf{X}$, its joint probability
density $f_{\mathbf{X}}(\mathbf{x})=\Pi_{i=1}^{i=N}f_{i}(x_{i})$
is expressed by a product of marginal probability density functions
$f_{i}$ of $X_{i}$, $i=1,\cdots,N$, defined on the probability
triple $(\Omega_{i},\mathcal{F}_{i},P_{i})$ with a bounded or an
unbounded support on $\mathbb{R}$. For a given $u\subseteq\{1,\cdots,N\}$,
$f_{\mathbf{X}_{-u}}(\mathbf{x}_{-u}):=\prod_{i=1,i\notin u}^{N}f_{i}(x_{i})$
defines the marginal density function of $\mathbf{X}_{-u}:=\mathbf{X}_{\{1,\cdots,N\}\backslash u}$.

\subsection{ANOVA Dimensional Decomposition}

Let $y(\mathbf{X}):=y(X_{1},\cdots,X_{N}$), a real-valued, measurable
transformation on $(\Omega,\mathcal{F})$, define a stochastic response to a high-dimensional random input and $\mathcal{L}_{2}(\Omega,\mathcal{F},P)$
represent a Hilbert space of square-integrable functions $y$ with
respect to the induced generic measure $f_{\mathbf{X}}(\mathbf{x})d\mathbf{x}$
supported on $\mathbb{R}^{N}$. The ANOVA dimensional decomposition,
expressed by the recursive form \cite{efron81,rahman11b,sobol03}
\begin{align}
y(\mathbf{X}) & ={\displaystyle \sum_{u\subseteq\{1,\cdots,N\}}y_{u}(\mathbf{X}_{u})},\label{1a}\\
y_{\emptyset} & =\int_{\mathbb{R}^{N}}y(\mathbf{x})f_{\mathbf{X}}(\mathbf{x})d\mathbf{x},\label{1b}\\
y_{u}(\mathbf{X}_{u}) & ={\displaystyle \int_{\mathbb{R}^{N-|u|}}y(\mathbf{X}_{u},\mathbf{x}_{-u})}f_{\mathbf{X}_{-u}}(\mathbf{x}_{-u})d\mathbf{\mathbf{x}}_{-u}-{\displaystyle \sum_{v\subset u}}y_{v}(\mathbf{X}_{v}),\label{1c}
\end{align}
is a finite, hierarchical expansion in terms of its input variables
with increasing dimensions, where $u\subseteq\{1,\cdots,N\}$ is a
subset with the complementary set $-u=\{1,\cdots,N\}\backslash u$
and cardinality $0\le|u|\le N$, and $y_{u}$ is a $|u|$-variate
component function describing a constant or the interactive effect
of $\mathbf{X}_{u}=(X_{i_{1}},\cdots,X_{i_{|u|}})$, $1\leq i_{1}<\cdots<i_{|u|}\leq N$,
a subvector of $\mathbf{X}$, on $y$ when $|u|=0$ or $|u|>0$. The
summation in Equation \eqref{1a} comprises $2^{N}$ terms, with each
term depending on a group of variables indexed by a particular subset
of $\{1,\cdots,N\}$, including the empty set $\emptyset$.

The ADD component functions $y_{u}$, $u\subseteq\{1,\cdots,N\}$,
have two remarkable properties: (1) the component functions,
$y_{u}$, $\emptyset\ne u\subseteq\{1,\cdots,N\}$, have \emph{zero} means;
and (2) two distinct component functions $y_{u}$ and $y_{v}$, where
$u\subseteq\{1,\cdots,N\}$, $v\subseteq\{1,\cdots,N\}$, and $u\neq v$,
are orthogonal \cite{rahman11b}. However, the ADD component functions are
difficult to obtain, because they require calculation of high-dimensional
integrals.

\subsection{Referential Dimensional Decomposition}

Consider a reference point $\mathbf{c}=(c_{1},\cdots,c_{N})\in\mathbb{R}^{N}$
and the associated Dirac measure $\prod_{i=1}^{N}\delta(x_{i}-c_{i})dx_{i}$.
The referential dimensional decomposition is created when $\prod_{i=1}^{N}\delta(x_{i}-c_{i})dx_{i}$
replaces the probability measure in Equations \eqref{1a}-\eqref{1c},
leading to the recursive form
\begin{align}
y(\mathbf{X}) & ={\displaystyle \sum_{u\subseteq\{1,\cdots,N\}}w_{u}(\mathbf{X}_{u};\mathbf{c})},\label{5a}\\
w_{\emptyset} & =y(\mathbf{c}),\label{5b}\\
w_{u}(\mathbf{X}_{u};\mathbf{c}) & =y(\mathbf{X}_{u},\mathbf{c}_{-u})-{\displaystyle \sum_{v\subset u}}w_{v}(\mathbf{X}_{v};\mathbf{c}),\label{5c}
\end{align}
also known as cut-HDMR \cite{rabitz99}, anchored decomposition \cite{hickernell96},
and anchored-ANOVA decomposition \cite{griebel10}, with the latter two referring
to the reference point as the anchor. Xu and Rahman introduced Equations
\eqref{5a}-\eqref{5c} with the aid of Taylor series expansion, calling
them dimension-reduction \cite{xu04} and decomposition \cite{xu05} methods
for statistical moment and reliability analyses, respectively, of
mechanical systems. Compared with ADD, RDD lacks orthogonal features,
but its component functions are easier to obtain as they only involve
function evaluations at a chosen reference point.

\subsection{Polynomial Dimensional Decomposition}

Let $\{\psi_{ij}(X_{i});\; j=0,1,\cdots\}$ be a set of orthonormal
polynomial basis functions in the Hilbert space $\mathcal{L}_{2}(\Omega_{i},\mathcal{F}_{i},P_{i})$
that is consistent with the probability measure $P_{i}$ of $X_{i}$, where $i = 1,\cdots,N$.
For a given $\emptyset\ne u=\{i_{1},\cdots,i_{|u|}\}\subseteq\{1,\cdots,N\}$,
$1\le|u|\le N$, $1\le i_{1}<\cdots<i_{|u|}\le N$, denote a product
probability triple by $(\times_{p=1}^{p=|u|}\Omega_{i_{p}},\times_{p=1}^{p=|u|}\mathcal{F}_{i_{p}},\times_{p=1}^{p=|u|}P_{i_{p}})$,
and the associated space of square integrable $|u|$-variate component
functions of $y$ by $\mathcal{L}_{2}(\times_{p=1}^{p=|u|}\Omega_{i_{p}},\times_{p=1}^{p=|u|}\mathcal{F}_{i_{p}},\times_{p=1}^{p=|u|}P_{i_{p}}):=\{y_{u}:\int_{\mathbb{R}^{|u|}}y_{u}^{2}(\mathbf{x}_{u})f_{\mathbf{X}_{u}}(\mathbf{x}_{u})d\mathbf{x}_{u}<\infty\}$,
which is a Hilbert space. Since the joint density of $(X_{i_{1}},\cdots,X_{i_{|u|}})$
is separable (independence), \emph{i.e.}, $f_{\mathbf{X}_{u}}(\mathbf{x}_{u})={\textstyle \prod_{p=1}^{|u|}}f_{i_{p}}(x_{i_{p}})$,
the product polynomial $\psi_{u\mathbf{j}_{|u|}}(\mathbf{X}_{u}):=\prod_{p=1}^{|u|}\psi_{i_{p}j_{p}}(X_{i_{p}})$,
where $\mathbf{j}_{|u|}=(j_{1},\cdots,j_{|u|})\in\mathbb{N}_{0}^{|u|}$,
a $|u|$-dimensional multi-index with $\infty$-norm $\left\Vert \mathbf{j}_{|u|}\right\Vert _{\infty}:=\max(j_{1},\cdots,j_{|u|})$,
constitutes an orthonormal basis in $\mathcal{L}_{2}(\times_{p=1}^{p=|u|}\Omega_{i_{p}},$
$\times_{p=1}^{p=|u|}\mathcal{F}_{i_{p}},$ $\times_{p=1}^{p=|u|}P_{i_{p}})$.

The orthogonal polynomial expansion of a non-constant
$|u|$-variate component function becomes \cite{rahman08,rahman09}
\begin{equation}
y_{u}(\mathbf{X}_{u})=\sum_{{\textstyle {\mathbf{j}_{|u|}\in\mathbb{N}_{0}^{|u|}\atop j_{1},\cdots,j_{|u|}\neq0}}}C_{u\mathbf{j}_{|u|}}\psi_{u\mathbf{j}_{|u|}}(\mathbf{X}_{u}),\:\emptyset\ne u\subseteq\{1,\cdots,N\},\label{6}
\end{equation}
 with
\begin{equation}
C_{u\mathbf{j}_{|u|}}:=\int_{\mathbb{R}^{N}}y(\mathbf{x})\psi_{u\mathbf{j}_{|u|}}(\mathbf{\mathbf{x}}_{u})f_{\mathbf{X}}(\mathbf{x})d\mathbf{x},\;\emptyset\ne u\subseteq\{1,\cdots,N\},\;\mathbf{j}_{|u|}\in\mathbb{N}_{0}^{|u|},\label{7}
\end{equation}
representing the corresponding expansion coefficient. The end result
of combining Equations \eqref{1a} and \eqref{6} is the PDD \cite{rahman08,rahman09},
\begin{equation}
y(\mathbf{X})=y_{\emptyset}+{\displaystyle \sum_{\emptyset\ne u\subseteq\{1,\cdots,N\}}}\:\sum_{{\textstyle {\mathbf{j}_{|u|}\in\mathbb{N}_{0}^{|u|}\atop j_{1},\cdots,j_{|u|}\neq0}}}C_{u\mathbf{j}_{|u|}}\psi_{u\mathbf{j}_{|u|}}(\mathbf{X}_{u}),\label{8}
\end{equation}
providing an exact, hierarchical expansion of $y$ in terms of an
infinite number of coefficients or orthonormal polynomials. All component functions
$y_{u}$, $\emptyset\ne u\subseteq\{1,\cdots,N\}$, in Equation \eqref{6} have \emph{zero} means and
satisfy the orthogonal properties of the ADD. Therefore,
PDD can be viewed as the polynomial version of ADD,
inheriting all desirable properties of ADD.

\subsection{Truncated Dimensional Decompositions}

The three dimensional decompositions $-$ ADD, RDD and PDD $-$ are grounded on a fundamental conjecture
known to be true in many real-world applications: given a high-dimensional
function $y$, its $|u|$-variate component functions decay rapidly
with respect to $|u|$, leading to accurate lower-variate approximations
of $y$. Indeed, given the integers $0\le S<N$ and $1\le m<\infty$ for all $1\le|u|\le S$,
the truncated dimensional decompositions
\begin{align}
\tilde{y}_{S}(\mathbf{X}) & ={\displaystyle \sum_{{\textstyle {u\subseteq\{1,\cdots,N\}\atop 0\le|u|\le S}}}y_{u}(\mathbf{X}_{u})},\label{sr1}\\
\hat{y}_{S}(\mathbf{X};\mathbf{c}) & =\sum_{{\textstyle {u\subseteq\{1,\cdots,N\}\atop 0\le|u|\le S}}}w_{u}(\mathbf{X}_{u};\mathbf{c}),\label{sr2}\\
\tilde{y}_{S,m}(\mathbf{X}) & =y_{\emptyset}+{\displaystyle \sum_{{\textstyle {\emptyset\ne u\subseteq\{1,\cdots,N\}\atop 1\le|u|\le S}}}}\:\sum_{{\textstyle {\mathbf{j}_{|u|}\in\mathbb{N}_{0}^{|u|},\left\Vert \mathbf{j}_{|u|}\right\Vert _{\infty}\le m\atop j_{1},\cdots,j_{|u|}\neq0}}}C_{u\mathbf{j}_{|u|}}\psi_{u\mathbf{j}_{|u|}}(\mathbf{X}_{u}),\label{sr3}
\end{align}
respectively, describe $S$-variate ADD, RDD, and PDD approximations,
which for $S>0$ include interactive effects of at most $S$ input
variables $X_{i_{1}},\cdots,X_{i_{S}}$, $1\leq i_{1}<\cdots<i_{S}\leq N$,
on $y$. It is elementary to show that when $S\to N$ and/or $m\to\infty$,
$\tilde{y}_{S}$, $\hat{y}_{S}$, and $\tilde{y}_{S,m}$ converge
to $y$ in the mean-square sense, generating a hierarchical and convergent
sequence of approximation of $y$ from each decomposition.

\subsubsection{ADD and RDD Errors}

For ADD or RDD to be useful, what are the approximation errors committed
by $\tilde{y}_{S}(\mathbf{X})$ and $\hat{y}_{S}(\mathbf{X};\mathbf{c})$
in Equations \eqref{sr1} and \eqref{sr2}? More importantly, for
a given $0\le S<N$ , which approximation between ADD and RDD is better?
Since the RDD approximation depends on the reference point $\mathbf{c}$, no analytical error analysis is possible if $\mathbf{c}$ is deterministic or arbitrarily chosen. However, if $\mathbf{c}$ follows the same probability measure of $\mathbf{X}$, then the error committed by an RDD approximation on average can be compared with the error from an ADD approximation, as follows.

\begin{thm}
\label{thm1}
Let $\mathbf{c}=(c_{1},\cdots,c_{N})\in\mathbb{R}^{N}$
be a random vector with the joint probability density function of
the form $f_{\mathbf{X}}(\mathbf{c})=\prod_{j=1}^{j=N}f_{j}(c_{j})$,
where $f_{j}$ is the marginal probability density function of the
$j$th coordinate of $\mathbf{X}=(X_{1},\cdots,X_{N})$. Define two
second-moment errors
\begin{equation}
e_{S,A}:=\mathbb{E}\left[\left(y(\mathbf{X})-\tilde{y}_{S}(\mathbf{X})\right)^{2}\right]:=\int_{\mathbb{R}^{N}}\left[y(\mathbf{x})-\tilde{y}_{S}(\mathbf{x})\right]^{2}f_{\mathbf{X}}(\mathbf{x})d\mathbf{x},\label{sr4}
\end{equation}
and
\begin{equation}
\begin{split}
e_{S,R}(\mathbf{c})&:=\mathbb{E}\left[\left(y(\mathbf{X})-\hat{y}_{S}(\mathbf{X};\mathbf{c})\right)^{2}\right]\\
&:=\int_{\mathbb{R}^{N}}\left[y(\mathbf{x})-\hat{y}_{S}(\mathbf{x};\mathbf{c})\right]^{2}f_{\mathbf{X}}(\mathbf{x})d\mathbf{x},\label{sr5}
\end{split}
\end{equation}
committed by the $S$-variate ADD and RDD approximations, respectively,
of $y(\mathbf{X})$. Then the lower and upper bounds of the expected
error $\mathbb{E}\left[e_{S,R}\right]:=\int_{\mathbb{R}^{N}}e_{S,R}(\mathbf{c})f_{\mathbf{X}}(\mathbf{c})d\mathbf{c}$
from the $S$-variate RDD approximation, expressed in terms of the
error $e_{S,A}$ from the $S$-variate ADD approximation, are
\begin{equation}
2^{S+1}e_{S,A}\le\mathbb{E}\left[e_{S,R}\right]\le\left[{\displaystyle 1+\sum_{k=0}^{S}}\binom{N-S+k-1}{k}^{2}\binom{N}{S-k}\right]e_{S,A},\label{sr6}
\end{equation}
where $0\le S<N,$ $S+1\le N<\infty$.
\end{thm}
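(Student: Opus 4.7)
The plan is to reduce both errors to tail sums over subsets with $|u|>S$, express the RDD tail in the orthonormal ADD basis, and then exploit the orthogonality and zero-mean properties of the ADD components to evaluate the expectation over $\mathbf{c}$ in closed form. First, the defining recursions \eqref{1a}--\eqref{1c} and \eqref{5a}--\eqref{5c} yield
$$y(\mathbf{X})-\tilde{y}_S(\mathbf{X})=\sum_{|u|>S}y_u(\mathbf{X}_u),\qquad y(\mathbf{X})-\hat{y}_S(\mathbf{X};\mathbf{c})=\sum_{|u|>S}w_u(\mathbf{X}_u;\mathbf{c}),$$
so that the orthogonality and zero-mean properties of the ADD components give the baseline identity $e_{S,A}=\sum_{|u|>S}\sigma_u^2$ with $\sigma_u^2:=\mathbb{E}[y_u^2(\mathbf{X}_u)]$.

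The second step is to translate each RDD component into the ADD basis. M\"obius inversion of the RDD recursion gives $w_u(\mathbf{X}_u;\mathbf{c})=\sum_{v\subseteq u}(-1)^{|u|-|v|}y(\mathbf{X}_v,\mathbf{c}_{-v})$; substituting the ANOVA expansion of $y$ at the hybrid argument $(\mathbf{X}_v,\mathbf{c}_{-v})$ and collapsing the alternating sum expresses $w_u$ as a linear combination of ADD pieces of the form $y_t(\mathbf{X}_a,\mathbf{c}_{t\setminus a})$, with $a\subseteq u\cap t$ controlling which indices receive an $\mathbf{X}$-coordinate and which receive a $\mathbf{c}$-coordinate. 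Squaring the RDD tail and taking expectation over the independent copies $(\mathbf{X},\mathbf{c})$, both with density $f_{\mathbf{X}}$, the identity of distribution $(\mathbf{X}_a,\mathbf{c}_{t\setminus a})\stackrel{d}{=}\mathbf{X}_t$ yields $\mathbb{E}[y_t^2(\mathbf{X}_a,\mathbf{c}_{t\setminus a})]=\sigma_t^2$, while the zero-mean property $\mathbb{E}_{X_i}[y_t]=0$ for any $i\in t$ kills nearly all cross terms; the surviving cross terms also reduce to multiples of $\sigma_t^2$.

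After this orthogonality collapse, one obtains
$$\mathbb{E}[e_{S,R}]=\sum_{|t|>S}D_{|t|,S,N}\,\sigma_t^2,$$
where the multiplier $D_{|t|,S,N}$ is strictly positive and depends only on $|t|$, $S$, and $N$. Comparing with the baseline $e_{S,A}=\sum_{|t|>S}\sigma_t^2$ and extremizing $D_{|t|,S,N}$ over $S+1\le|t|\le N$ delivers the bounds \eqref{sr6}: the minimum equals $2^{S+1}$, counting the $2^{S+1}$ hybrid assignments $a\subseteq t$ that contribute unit weight when $|t|=S+1$, and the maximum evaluates via a Vandermonde-type identity to $1+\sum_{k=0}^{S}\binom{N-S+k-1}{k}^2\binom{N}{S-k}$.

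The main obstacle I expect is the combinatorial bookkeeping of this orthogonality collapse: one must simultaneously track the signs of the M\"obius alternating sum, identify exactly which ADD cross terms $\mathbb{E}[y_t(\cdots)y_{t'}(\cdots)]$ survive under the joint law of $(\mathbf{X},\mathbf{c})$ by appeal to zero-mean in individual coordinates, and then verify that the signed contributions combine into the single positive coefficient $D_{|t|,S,N}$ that is a function only of $|t|$. Once this canonical form is in hand, the two bounds are elementary binomial extremizations.
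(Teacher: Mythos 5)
The paper gives no in-text proof---it simply cites Theorem 4.12 and Corollary 4.13 of Rahman---and your plan follows essentially that same route: M\"obius inversion of the RDD recursion, expansion of the hybrid-argument evaluations $y(\mathbf{X}_v,\mathbf{c}_{-v})$ in ADD components, the zero-mean and orthogonality properties under the independent, identically distributed pair $(\mathbf{X},\mathbf{c})$ to reach $\mathbb{E}[e_{S,R}]=\sum_{|t|>S}D_{|t|}\,\sigma_t^{2}$, and extremization of the coefficient over $S+1\le|t|\le N$. Your asserted endpoints are correct, and the bookkeeping you defer does close cleanly, since $D_{|t|}=1+\sum_{k=0}^{S}\binom{|t|}{S-k}\binom{|t|-S+k-1}{k}^{2}$ depends only on $|t|$ and $S$ (not on $N$) and is increasing in $|t|$, giving $D_{S+1}=2^{S+1}$ and $D_{N}$ equal to the stated upper coefficient.
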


\begin{proof}
See Theorem 4.12 and Corollary 4.13 of Rahman \cite{rahman11b}.
\end{proof}

\begin{rmk}
Theorem \ref{thm1} reveals that the expected error
from the univariate ($S=1$) RDD approximation is at least four times
larger than the error from the univariate ADD approximation. In contrast,
the expected error from the bivariate ($S=2$) RDD approximation can
be eight or more times larger than the error from the bivariate ADD
approximation. Given an arbitrary truncation, an ADD approximation
is superior to an RDD approximation. In addition, the RDD approximations
may perpetrate very large errors at upper bounds when there exist
a large number of variables and appropriate conditions. Therefore,
existing adaptive methods \cite{zabaras10,karniadakis12} anchored in RDD approximations
should be used with caveat. Furthermore, the authors advocate using
PDD for adaptivity, but doing so engenders its own computational challenges,
to be explained in the forthcoming sections.
\end{rmk}

\subsubsection{Statistical Moments of PDD}

Applying the expectation operator on $\tilde{y}_{S,m}(\mathbf{X})$
and $(\tilde{y}_{S,m}(\mathbf{X})-y_{\emptyset})^{2}$ and noting the
\emph{zero}-mean and orthogonal properties of PDD component functions, the mean \cite{rahman10}
\begin{equation}
\mathbb{E}\left[\tilde{y}_{S,m}(\mathbf{X})\right]=y_{\emptyset}\label{10}
\end{equation}
of the $S$-variate, $m$th-order PDD approximation matches the exact
mean $\mathbb{E}\left[y(\mathbf{X})\right]$, regardless of $S$ or
$m$, and the approximate variance \cite{rahman10}
\begin{equation}
\begin{split}
\sigma_{S,m}^{2}&:=\mathbb{E}\left[\left(\tilde{y}_{S,m}(\mathbf{X})-\mathbb{E}\left[\tilde{y}_{S,m}(\mathbf{X})\right]\right)^{2}\right]\\
&={\displaystyle \sum_{{\textstyle {\emptyset\ne u\subseteq\{1,\cdots,N\}\atop 1\le|u|\le S}}}}\:\sum_{{\textstyle {\mathbf{j}_{|u|}\in\mathbb{N}_{0}^{|u|},\left\Vert \mathbf{j}_{|u|}\right\Vert _{\infty}\le m\atop j_{1},\cdots,j_{|u|}\neq0}}}C_{u\mathbf{j}_{|u|}}^{2}\label{11}
\end{split}
\end{equation}
is calculated as the sum of squares of the expansion coefficients
from the $S$-variate, $m$th-order PDD approximation of $y(\mathbf{X})$.
Clearly, the approximate variance approaches the exact variance \cite{rahman10}
\begin{equation}
\sigma^{2}:=\mathbb{E}\left[\left(y(\mathbf{X})-\mathbb{E}\left[y(\mathbf{X})\right]\right)^{2}\right]={\displaystyle {\displaystyle \sum_{\emptyset\ne u\subseteq\{1,\cdots,N\}}}}\:\sum_{{\textstyle {\mathbf{j}_{|u|}\in\mathbb{N}_{0}^{|u|}\atop j_{1},\cdots,j_{|u|}\neq0}}}C_{u\mathbf{j}_{|u|}}^{2}\label{sr7}
\end{equation}
of $y$ when $S\to N$ and $m\to\infty$. The mean-square convergence
of $\tilde{y}_{S,m}$ is guaranteed as $y$, and its component functions
are all members of the associated Hilbert spaces.

The $S$-variate, $m$th-order PDD approximation $\tilde{y}_{S,m}(\mathbf{X})$
in Equation \eqref{sr3} contains
\begin{equation}
\tilde{K}_{S,m}=1+{\displaystyle \sum_{{\textstyle {\emptyset\ne u\subseteq\{1,\cdots,N\}\atop 1\le|u|\le S}}}}\:{\displaystyle \sum_{{\textstyle {\mathbf{j}_{|u|}\in\mathbb{N}_{0}^{|u|},\left\Vert \mathbf{j}_{|u|}\right\Vert _{\infty}\le m\atop j_{1},\cdots,j_{|u|}\neq0}}}}1=\sum_{k=0}^{S}\binom{N}{k}m^{k}\label{eq:K-tilda-Sm}
\end{equation}
number of PDD coefficients and corresponding orthonormal polynomials.
Therefore, the computational complexity of a truncated PDD is polynomial,
as opposed to exponential, thereby alleviating the curse of dimensionality
to some extent.

\begin{rmk}
Constructing a PDD approximation by pre-selecting
$S$ and/or $m$, unless they are quite small, is computationally
intensive, if not impossible, for high-dimensional uncertainty quantification.
In other words, the existing PDD is neither scalable nor adaptable,
which is crucial for solving industrial-scale stochastic problems.
A requisite theoretical basis and innovative numerical algorithms
for overcoming these limitations are presented in Section 3.
\end{rmk}

\begin{rmk}
The PDD approximation and its second-moment analysis
require the expansion coefficients $C_{u\mathbf{j}_{|u|}}$, which,
according to their definition in Equation \eqref{7}, involve
various $N$-dimensional integrals over $\mathbb{R}^{N}$. For large
$N$, a full numerical integration employing an $N$-dimensional tensor
product of a univariate quadrature formula is computationally prohibitive.
This is one drawback of ADD and PDD, since their component functions
entail calculating high-dimensional integrals. Therefore, novel dimension-reduction
integration schemes or sampling techniques, to be described in Section
4, are needed to estimate the coefficients efficiently.
\end{rmk}

\subsubsection{PDD versus PCE Approximations}

The long form of an $S$-variate, $m$th-order PDD approximation of
$y(\mathbf{X})$ is the expansion \cite{rahman08, rahman09}
\begin{equation}
\begin{array}{ccl}
\tilde{y}_{S,m}(\mathbf{X}) \!\!\!\!\!& := &\!\!\!\!\! y_{0}+{\displaystyle \sum_{i=1}^{N}\sum_{j=1}^{m}C_{ij}\psi_{ij}(X_{i})}+\\
 &  & \!\!\!\!\!{\displaystyle \sum_{i_{1}=1}^{N-1}\:\sum_{i_{2}=i_{1}+1}^{N}}{\displaystyle \;\sum_{j_{2}=1}^{m}\sum_{j_{1}=1}^{m}C_{i_{1}i_{2}j_{1}j_{2}}\psi_{i_{1}j_{1}}(X_{i_{1}})\psi_{i_{2}j_{2}}(X_{i_{2}})}+\cdots+\\
 &  & \!\!\!\!\!\!\!\!{\displaystyle \begin{array}[t]{c}
\underbrace{{\displaystyle \sum_{i_{1}=1}^{N-s+1}\!\! \cdots \!\! \sum_{i_{S}=i_{S-1}+1}^{N}}}\\
{\scriptstyle S\;\mathrm{sums}}
\end{array}\!\!\!\!{\displaystyle \;\begin{array}[t]{c}
\underbrace{{\displaystyle \sum_{j_{1}=1}^{m}\cdots\sum_{j_{S}=1}^{m}}}\\
{\scriptstyle S\;\mathrm{sums}}
\end{array}\!\!C_{i_{1}\cdots i_{S}j_{1}\cdots j_{S}}\prod_{q=1}^{S}\psi_{i_{q}j_{q}}(X_{i_{q}})}}
\end{array}\label{1}
\end{equation}
in terms of random orthonormal polynomials $\psi_{ij}(X_{i})$, $i=1,\cdots,N$,
$j=1,\cdots,m$, of input variables $X_{1},\cdots,X_{N}$ with increasing
dimensions, where $y_{0}$ and $C_{i_{1}\cdots i_{S}j_{1}\cdots j_{S}}$,
$1\le i_{1}<\cdots<i_{S}\le N$, $j_{1},\cdots,j_{S}=1,\cdots,m$,
are the PDD expansion coefficients. In contrast, a $p$th-order PCE
approximation of $y(\mathbf{X})$, where $0\le p<\infty$, has the
representation \cite{ghanem91}
\begin{equation}
\begin{array}{ccl}
\check{y}_{p}(\mathbf{X}) & := & a_{0}\Gamma_{0}+{\displaystyle \sum_{i=1}^{N}a_{i}\Gamma_{1}(X_{i})}+{\displaystyle \sum_{i_{1}=1}^{N}\sum_{i_{2}=i_{1}}^{N}a_{i_{1}i_{2}}\Gamma_{2}(X_{i_{1}},X_{i_{2}})}\\
 &  & +\cdots+{\displaystyle \sum_{i_{1}=1}^{N}\cdots\sum_{i_{p}=i_{p-1}}^{N}}a_{i_{1}\cdots i_{p}}\Gamma_{p}(X_{i_{1}},\cdots,X_{i_{p}})
\end{array}\label{2}
\end{equation}
in terms of random polynomial chaoses $\Gamma_{p}(X_{i_{1}},\cdots,X_{i_{p}})$,
$1\le i_{1}\le\cdots\le i_{p}\le N$, of input variables $X_{i_{1}},\cdots,X_{i_{p}}$
with increasing orders, where $a_{0}$ and $a_{i_{1}\cdots i_{p}}$
are the PCE expansion coefficients. The polynomial chaoses are various
combinations of tensor products of sets of univariate orthonormal
polynomials. Therefore, both expansions share the same orthonormal
polynomials, and their coefficients require evaluating similar high-dimensional
integrals.

\begin{rmk}
The PDD and PCE when truncated are not the same.
In fact, two important observations jump out readily. First, the terms
in the PCE approximation are organized with respect to the order of
polynomials. In contrast, the PDD approximation is structured with
respect to the degree of interaction between a finite number of random
variables. Therefore, significant differences may exist regarding
the accuracy, efficiency, and convergence properties of their truncated
sum or series. Second, if a stochastic response is highly nonlinear,
but contains rapidly diminishing interactive effects of multiple random
variables, the PDD approximation is expected to be more effective
than the PCE approximation. This is because the lower-variate (univariate,
bivariate, \emph{etc.}) terms of the PDD approximation can be just
as nonlinear by selecting appropriate values of $m$ in Equation \eqref{1}.
In contrast, many more terms and expansion coefficients are required
to be included in the PCE approximation to capture such high nonlinearity.
\end{rmk}

In reference to a past study \cite{rahman11}, consider two mean-squared errors,
$e_{S,m}:=\mathbb{E}[y(\mathbf{X})-\tilde{y}_{S,m}(\mathbf{X})]^{2}$
and $e_{p}:=\mathbb{E}[y(\mathbf{X})-\check{y}_{p}(\mathbf{X})]^{2}$,
owing to the $S$-variate, $m$th-order PDD approximation $\tilde{y}_{S,m}(\mathbf{X})$
and $p$th-order PCE approximation $\check{y}_{p}(\mathbf{X})$, respectively,
of $y(\mathbf{X})$. For a class of problems where the interactive
effects of $S$ input variables on a stochastic response get progressively
weaker as $S\to N$, then the PDD and PCE errors for identical expansion
orders can be weighed against each other. For this special case, set
$m=p$ and assume that ${\displaystyle C_{i_{1}\cdots i_{s}j_{1}\cdots j_{s}}=0}$,
where $s=S+1,\cdots,N$, $1\le i_{1}<\cdots<i_{s}\le N$, $j_{1},\cdots j_{s}=1,\cdots,\infty$.
Then it can be shown that $e_{m}\ge e_{S,m}$, demonstrating larger
error from the PCE approximation than from the PDD approximation \cite{rahman11}.
In the limit, when $S=N$, $e_{m}\ge e_{N,m}$, regardless of the
values of the expansions coefficients. In other words, the $N$-variate,
$m$th-order PDD approximation cannot be worse than the $m$th-order
PCE approximation. When $S<N$ and ${\displaystyle C_{i_{1}\cdots i_{s}j_{1}\cdots j_{s}}}$,
$s=S+1,\cdots,N$, $1\le i_{1}<\cdots<i_{s}\le N$, $j_{1},\cdots j_{s}=1,\cdots,\infty$,
are not negligible and arbitrary, numerical convergence analysis is
required for comparing these two errors. Indeed, numerical analyses
of mathematical functions or simple dynamic systems reveal markedly
higher convergence rates of the PDD approximation than the PCE approximation \cite{rahman11}.
From the comparison of computational efforts, required to estimate
with the same precision the frequency distributions of complex dynamic
systems, the PDD approximation can be significantly more efficient
than the PCE approximation \cite{rahman11}.

\section{Proposed Adaptive-Sparse PDD Methods}

\subsection{Global Sensitivity Indices}

The global sensitivity analysis quantifies how an output function
of interest is influenced by individual or subsets of input variables,
illuminating the dimensional structure lurking behind a complex response.
Indeed, these sensitivity indices have been used to rank variables,
fix unessential variables, and reduce dimensions of large-scale problems
\cite{rahman11d, sobol01}. The authors propose to exploit these indices, developed
in conjunction with PDD, for adaptive-sparse PDD approximations as
follows.

The global sensitivity index of $y(\mathbf{X})$ for a subset $\mathbf{X}_{u}$,
$\emptyset\ne u\subseteq\{1,\cdots,N\}$, of input variables $\mathbf{X},$
denoted by $G_{u}$, is defined as the non-negative ratio \cite{rahman11d, sobol01}
\begin{equation}
G_{u}:=\frac{\mathbb{E}\left[y_{u}^{2}(\mathbf{X})\right]}{\sigma^{2}},0<\sigma^{2}<\infty,\label{sr8}
\end{equation}
representing the fraction of the variance of $y(\mathbf{X})$ contributed
by the ADD component function $y_{u}$. Since $\emptyset\ne u\subseteq\{1,\cdots,N\}$,
there exist $2^{N}-1$ such indices, adding up to $\sum_{u\subseteq\{1,\cdots,N\}}G_{u}=1$.
Applying the Fourier-polynomial approximation of $y_{u}(\mathbf{X})$,
that is, Equation \eqref{6}, and noting the properties of orthonormal polynomials, the component variance
\begin{equation}
\mathbb{E}\left[y_{u}^{2}(\mathbf{X}_{u})\right]=\sum_{{\textstyle {\mathbf{j}_{|u|}\in\mathbb{N}_{0}^{|u|}\atop j_{1},\cdots,j_{|u|}\neq0}}}C_{u\mathbf{j}_{|u|}}^{2}\label{sr9}
\end{equation}
of $y_{u}$ is the sum of squares of its PDD expansion coefficients.
When the right side of Equation \eqref{sr9} is truncated at $\left\Vert \mathbf{j}_{|u|}\right\Vert _{\infty}=m_{u}$, where $m_{u}\in\mathbb{N}$, and then used to replace the numerator
of Equation \eqref{sr8}, the result is an $m_{u}$th-order approximation
\begin{equation}
\tilde{G}_{u,m_{u}}:=\frac{1}{\sigma^{2}}\sum_{{\textstyle {\mathbf{j}_{|u|}\in\mathbb{N}_{0}^{|u|},\left\Vert \mathbf{j}_{|u|}\right\Vert _{\infty}\le m_{u}\atop j_{1},\cdots,j_{|u|}\neq0}}}C_{u\mathbf{j}_{|u|}}^{2},\label{sr10}
\end{equation}
which approaches $G_{u}$ as $m_{u}\to\infty$. Given $2\le m_{u}<\infty$, consider two approximate global sensitivity
indices $\tilde{G}_{u,m_{u}-1}$ and $\tilde{G}_{u,m_{u}}$for $\mathbf{X}_{u}$
such that $\tilde{G}_{u,m_{u}-1}\ne0$. Then the normalized index,
defined by
\begin{equation}
\Delta\tilde{G}_{u,m_{u}}:=\frac{\tilde{G}_{u,m_{u}}-\tilde{G}_{u,m_{u}-1}}{\tilde{G}_{u,m_{u}-1}},\;\tilde{G}_{u,m_{u}-1}\ne0,\label{sr11}
\end{equation}
represents the relative change in the approximate global sensitivity
index when the largest polynomial order increases from $m_{u}-1$
to $m_{u}$. The sensitivity indices $\tilde{G}_{u,m_{u}}$ and $\Delta\tilde{G}_{u,m_{u}}$
provide an effective means to truncate the PDD in Equation \eqref{8}
both adaptively and sparsely.

\subsection{The Fully Adaptive-Sparse PDD Method}

Let $\epsilon_{1}\ge0$ and $\epsilon_{2}\ge0$ denote two non-negative
error tolerances that specify the minimum values of $\tilde{G}_{u,m_{u}}$
and $\Delta\tilde{G}_{u,m_{u}}$, respectively. Then a fully adaptive-sparse
PDD approximation
\begin{equation}
\bar{y}(\mathbf{X}):=y_{\emptyset}+{\displaystyle \sum_{\emptyset\ne u\subseteq\{1,\cdots,N\}}}\:{\displaystyle \sum_{m_{u}=1}^{\infty}}\:\sum_{{\textstyle {\left\Vert \mathbf{j}_{|u|}\right\Vert _{\infty}=m_{u},\, j_{1},\cdots,j_{|u|}\neq0\atop \tilde{G}_{u,m_{u}}>\epsilon_{1},\Delta\tilde{G}_{u,m_{u}}>\epsilon_{2}}}}C_{u\mathbf{j}_{|u|}}\psi_{u\mathbf{j}_{|u|}}(\mathbf{X}_{u})\label{29}
\end{equation}
of $y(\mathbf{X})$ is formed by the subset of PDD component functions,
satisfying two inclusion criteria: (1) $\tilde{G}_{u,m_{u}}>\epsilon_{1}$,
and (2) $\Delta\tilde{G}_{u,m_{u}}>\epsilon_{2}$ for all $1\le|u|\le N$
and $1\le m_{u}<\infty$. The first criterion requires the contribution
of an $m_{u}$-th order polynomial approximation of $y_{u}(\mathbf{X})$
towards the variance of $y(\mathbf{X})$ to exceed $\epsilon_{1}$
in order to be accommodated within the resultant truncation. The second
criterion identifies the augmentation in the variance contribution
from $y_{u}(\mathbf{X}_{u})$ evoked by a single increment in the
polynomial order $m_{u}$ and determines if it surpasses $\epsilon_{2}$.
In other words, these two criteria ascertain which interactive effects
between two or more input random variables are retained and dictate the largest
order of polynomials in a component function, formulating
a fully adaptive-sparse PDD approximation.

When compared with the PDD in Equation \eqref{8}, the adaptive-sparse
PDD approximation in Equation \eqref{29} filters out the relatively
insignificant component functions with a scant compromise on the accuracy
of the resulting approximation. Furthermore, there is no need to pre-select
the truncation parameters of the existing PDD approximation. The level
of accuracy achieved by the fully adaptive-sparse PDD is meticulously
controlled through the tolerances $\epsilon_{1}$ and $\epsilon_{2}$.
The lower the tolerance values, the higher the accuracy of the approximation.
It is elementary to show that the mean-squared error in the fully adaptive-sparse
PDD approximation disappears when the tolerances vanish, that is, $\bar{y}(\mathbf{X})$ approaches $y(\mathbf{X})$
as $\epsilon_{1}\to0$, $\epsilon_{2}\to0$.

\subsection{A Partially Adaptive-Sparse PDD Method}

Based on the authors' past experience, an $S$-variate PDD approximation,
where $S\ll N$, is adequate, when solving real-world engineering
problems, with the computational cost varying polynomially ($S$-order)
with respect to the number of variables \cite{rahman08,rahman09}. As an example,
consider the selection of $S=2$ for solving a stochastic problem
in 100 dimensions by a bivariate PDD approximation, comprising $100\times99/2=4950$
bivariate component functions. If all such component functions are
included, then the computational effort for even a full bivariate
PDD approximation may exceed the computational budget allocated to
solving this problem. But many of these component functions contribute
little to the probabilistic characteristics sought and can be safely
ignored. Similar conditions may prevail for higher-variate component
functions. Henceforth, define an $S$-variate, partially adaptive-sparse
PDD approximation
\begin{equation}
\bar{y}_{S}(\mathbf{X}):=y_{\emptyset}+{\displaystyle \sum_{{\textstyle {\emptyset\ne u\subseteq\{1,\cdots,N\}\atop 1\le|u|\le S}}}}\:{\displaystyle \sum_{m_{u}=1}^{\infty}}\:\sum_{{\textstyle {\left\Vert \mathbf{j}_{|u|}\right\Vert _{\infty}=m_{u},\, j_{1},\cdots,j_{|u|}\neq0\atop \tilde{G}_{u,m_{u}}>\epsilon_{1},\Delta\tilde{G}_{u,m_{u}}>\epsilon_{2}}}}C_{u\mathbf{j}_{|u|}}\psi_{u\mathbf{j}_{|u|}}(\mathbf{X}_{u})\label{30}
\end{equation}
of $y(\mathbf{X})$, which is attained by subsuming at most $S$-variate
component functions, but fulfilling two relaxed inclusion criteria:
(1) $\tilde{G}_{u,m_{u}}>\epsilon_{1}$ for $1\le|u|\le S\le N$, and
(2) $\Delta\tilde{G}_{u,m_{u}}>\epsilon_{2}$ for $1\le|u|\le S\le N$.
Again, the same two criteria are used for the degree of interaction
and the order of orthogonal polynomial, but the truncations are restricted
to at most $S$-variate component functions of $y$.

An $S$-variate, partially adaptive-sparse PDD approximation behaves
differently from the $S$-variate, $m$th-order PDD approximation.
While the latter approximation includes a sum containing at most $S$-variate
component functions, the former approximation may or may not include
all such component functions, depending on the tolerance $\epsilon_{1}$.
For $\epsilon_{1}>0$, an $S$-variate, partially adaptive-sparse
PDD will again trim the component functions with meager contributions.
However, unlike $\bar{y}$ converging to $y$, $\bar{y}_{S}$ converges
to the $S$-variate ADD approximation $\tilde{y}_{S}$, when $\epsilon_{1}\to0$, $\epsilon_{2}\to0$. If $S=N$, then both partially and fully
adaptive-sparse PDD approximations coincide for identical tolerances.

As $S \to N$, $\tilde{y}_{S}(\mathbf{X}) \to y(\mathbf{X})$ in the mean square sense. Given a rate at which $\sigma_{u}^2 := \mathbb{E}\left[y_{u}^2(\mathbf{X}_u)\right]$, the variance of an $|u|$-variate ADD component function, decreases with $|u|$, what can be inferred on how fast $\tilde{y}_{S}(\mathbf{X})$ converges to $y(\mathbf{X})$? Proposition \ref{pro1} and subsequent discussions provide some insights.

\begin{pro}
\label{pro1}
If the variance of a \emph{zero}-mean ADD component function $y_u$ diminishes according to $\sigma_{u}^2 \leq cq^{-|u|}$, where $\emptyset \neq u \subseteq \left\{1,\cdots,N\right\}$, and $c>0$ and $q>1$ are two real-valued constants, then the mean-squared error committed by $\tilde{y}_{S}(\mathbf{X})$, $0 \leq S \leq N$, is
\begin{equation}
\tilde{e}_{S} := \mathbb{E} \left[y(\mathbf{X})-\tilde{y}_{S}(\mathbf{X})\right]^2 \leq c \displaystyle \sum_{s=S+1}^{N}
{N\choose s}
q^{-s}. \label{pro1err}
\end{equation}

\begin{proof}
The result of Proposition \ref{pro1} follows by substituting the expressions of $y(\mathbf{X})$ and $\tilde{y}_{S}(\mathbf{X})$ from Equations \eqref{1a} and \eqref{sr1}, and then using $\sigma_{u}^2 := \mathbb{E}\left[y_{u}^2(\mathbf{X}_u)\right] \leq cq^{-|u|}$.
\end{proof}
\end{pro}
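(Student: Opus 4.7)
The plan is to evaluate the truncation error directly by isolating the ADD component functions that are discarded in the $S$-variate approximation and then bounding the resulting sum of component variances termwise using the hypothesized geometric decay. First, I would substitute the exact ADD in Equation \eqref{1a} and the truncated ADD in Equation \eqref{sr1} into the definition of $\tilde{e}_{S}$. All components with $|u| \le S$ appear identically in both expressions and cancel, so the residual reduces to
\[
y(\mathbf{X}) - \tilde{y}_{S}(\mathbf{X}) = \sum_{\substack{u \subseteq \{1,\ldots,N\} \\ |u| > S}} y_{u}(\mathbf{X}_{u}),
\]
a sum of ADD component functions of order strictly greater than $S$.

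Second, I would square this residual and take expectation with respect to $f_{\mathbf{X}}$. Invoking the two properties of ADD component functions recalled in Section 2.1 of the paper, namely that each non-constant $y_{u}$ has zero mean and that distinct $y_{u}$ and $y_{v}$ are mutually orthogonal in $\mathcal{L}_{2}(\Omega,\mathcal{F},P)$, all cross terms vanish. This Parseval-type identity yields
\[
\tilde{e}_{S} = \sum_{\substack{u \subseteq \{1,\ldots,N\} \\ |u| > S}} \mathbb{E}\!\left[y_{u}^{2}(\mathbf{X}_{u})\right] = \sum_{\substack{u \subseteq \{1,\ldots,N\} \\ |u| > S}} \sigma_{u}^{2}.
\]

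Finally, I would apply the hypothesis $\sigma_{u}^{2} \le c\, q^{-|u|}$ to each surviving term and regroup the subsets by cardinality. Since there are exactly $\binom{N}{s}$ subsets $u \subseteq \{1,\ldots,N\}$ with $|u| = s$, the bound collapses to $c\sum_{s=S+1}^{N}\binom{N}{s} q^{-s}$, matching Equation \eqref{pro1err}. There is no genuine obstacle here: the argument rests entirely on the orthogonality of the ADD basis followed by an elementary combinatorial regrouping. The only subtlety deserving care is citing the orthogonality property precisely, because without it the cross-expectations $\mathbb{E}[y_{u}(\mathbf{X}_{u})\,y_{v}(\mathbf{X}_{v})]$ for $u \neq v$ would contaminate the variance sum and the clean $\sigma_{u}^{2}$-decomposition would fail. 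Note also the endpoint consistency: for $S = N$ the right-hand side is an empty sum and equals zero, which agrees with $\tilde{y}_{N} \equiv y$.
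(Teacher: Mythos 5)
Your proof is correct and follows essentially the same route the paper intends: substituting Equations \eqref{1a} and \eqref{sr1}, invoking the zero-mean and orthogonality properties of the ADD component functions to reduce the error to the sum of discarded component variances, and then applying the decay bound with the $\binom{N}{s}$ count of subsets of each cardinality. You have simply spelled out the steps the paper's one-line proof leaves implicit, and your remarks on orthogonality and the $S=N$ endpoint are consistent with the paper.
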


When the equality holds, $\tilde{e}_S$ decays strictly monotonically with respect to $S$ for any rate parameter $q$. The higher the value of $S$, the faster $\tilde{y}_{S}(\mathbf{X})$ converges to $y(\mathbf{X})$ in the mean-square sense.

\subsection{Stochastic Solutions}

\subsubsection{Second-Moment Properties }

Applying the expectation operator on $\bar{y}(\mathbf{X})$ and $\bar{y}_{S}(\mathbf{X})$
and recognizing the \emph{zero}-mean and orthogonal properties of PDD component functions, the means
\begin{equation}
\mathbb{E}\left[\bar{y}(\mathbf{X})\right]=\mathbb{E}\left[\bar{y}_{S}(\mathbf{X})\right]=y_{\emptyset}\label{sr12}
\end{equation}
of fully and partially adaptive-sparse PDD approximations both also
agree with the exact mean $\mathbb{E}\left[y(\mathbf{X})\right]=y_{\emptyset}$
for any $\epsilon_{1}$, $\epsilon_{2}$, and $S$. However, the respective
variances, obtained by applying the expectation operator on $(\bar{y}(\mathbf{X})-y_{\emptyset})^{2}$
and $(\bar{y}_{S}(\mathbf{X})-y_{\emptyset})^{2}$, vary according to
\begin{equation}
\begin{split}
\bar{\sigma}^{2}&:=\mathbb{E}\left[\left(\bar{y}(\mathbf{X})-\mathbb{E}\left[\bar{y}(\mathbf{X})\right]\right)^{2}\right]\\
&={\displaystyle \sum_{\emptyset\ne u\subseteq\{1,\cdots,N\}}}\:{\displaystyle \sum_{m_{u}=1}^{\infty}}\:\sum_{{\textstyle {\left\Vert \mathbf{j}_{|u|}\right\Vert _{\infty}=m_{u},\, j_{1},\cdots,j_{|u|}\neq0\atop \tilde{G}_{u,m_{u}}>\epsilon_{1},\Delta\tilde{G}_{u,m_{u}}>\epsilon_{2}}}}C_{u\mathbf{j}_{|u|}}^{2}\label{sr13}
\end{split}
\end{equation}
and
\begin{equation}
\begin{split}
\bar{\sigma}_{S}^{2}&:=\mathbb{E}\left[\left(\bar{y}_{S}(\mathbf{X})-\mathbb{E}\left[\bar{y}_{S}(\mathbf{X})\right]\right)^{2}\right]\\
&={\displaystyle \sum_{{\textstyle {\emptyset\ne u\subseteq\{1,\cdots,N\}\atop 1\le|u|\le S}}}}\:{\displaystyle \sum_{m_{u}=1}^{\infty}}\:\sum_{{\textstyle {\left\Vert \mathbf{j}_{|u|}\right\Vert _{\infty}=m_{u},\, j_{1},\cdots,j_{|u|}\neq0\atop \tilde{G}_{u,m_{u}}>\epsilon_{1},\Delta\tilde{G}_{u,m_{u}}>\epsilon_{2}}}}C_{u\mathbf{j}_{|u|}}^{2},\label{sr14}
\end{split}
\end{equation}
where the squares of the expansion coefficients are summed following
the same two pruning criteria discussed in the preceding subsections.
Equations \eqref{sr12}-\eqref{sr14} provide closed-form expressions
of the approximate second-moment properties of any square-integrable
function $y$ in terms of the PDD expansion coefficients.

When $\epsilon_{1}=\epsilon_{2}=0$, the right sides of Equations \eqref{sr13}
and \eqref{sr7} coincide, whereas the right side of Equation \eqref{sr14}
approaches that of Equation \eqref{11} for $m\to\infty$. As a consequence,
the variance from the fully adaptive-sparse PDD approximation $\bar{y}(\mathbf{X})$
converges to the exact variance of $y(\mathbf{X})$ as $\epsilon_{1}\to0$
and $\epsilon_{2}\to0$. In contrast, the variance from the $S$-variate,
partially adaptive-sparse PDD approximation $\bar{y}_{S}(\mathbf{X})$
does not follow suit, as it converges to the variance of the $S$-variate,
$m$th-order PDD approximation $\tilde{y}_{S,m}(\mathbf{X})$ as $\epsilon_{1}\to0$
and $\epsilon_{2}\to0$, provided that $m\to\infty$. Therefore, the
fully adaptive-sparse PDD approximation is more rigorous than a partially
adaptive-sparse PDD approximation, but the latter can be more useful
than the former when solving practical engineering problems and will
be demonstrated in the Numerical Examples and Application sections.

\subsubsection{Probability Distribution}

Although the PDD approximations are mean-square convergent, Equations
\eqref{29} and \eqref{30} can also be used to estimate higher-order
moments and probability distributions, including rare-event probabilities,
of sufficiently smooth stochastic responses. In this paper, the probability
distribution of $y(\mathbf{X})$ was approximated by performing Monte
Carlo simulation of $\bar{y}(\mathbf{X})$ and/or $\bar{y}_{S}(\mathbf{X})$.
This simulation of the PDD approximation should not be confused with
\emph{crude} Monte Carlo simulation. The crude Monte Carlo method,
which commonly requires numerical calculations of $y$ for input samples
can be expensive or even prohibitive, particularly when the sample
size needs to be very large for estimating small failure probabilities.
In contrast, the Monte Carlo simulation embedded in a PDD approximation
requires evaluations of simple analytical functions. Therefore, an
arbitrarily large sample size can be accommodated in the PDD approximation.

It is also possible to estimate the probability distribution of $y(\mathbf{X})$
from the knowledge of the cumulant generating function of a PDD approximation,
provided that it exists, and then exploit the saddle point approximation
for obtaining an exponential family of approximate distributions.
Readers interested in this alternative approach are referred to the
authors' ongoing work on stochastic sensitivity analysis \cite{ren13}.

It is important to emphasize that the two truncation criteria proposed are strictly based on variance as a measure of output uncertainty. They are highly relevant when the second-moment properties of complex response is desired. For higher-order moments or rare-event probabilities, it is possible to develop alternative sensitivity indices and related pruning criteria. They are not considered here.

\subsection{Numerical Implementation}

The application of fully and partially adaptive-sparse PDD approximations
described by Equations \eqref{29} and \eqref{30} requires selecting
PDD component functions $y_{u}(\mathbf{X}_{u})$, $\emptyset\ne u\subseteq\{1,\cdots,N\}$
and assigning largest orders of their orthogonal polynomial expansions
$1\le m_{u}<\infty$ efficiently such that $\tilde{G}_{u,m_{u}}>\epsilon_{1}$
and $\Delta\tilde{G}_{u,m_{u}}>\epsilon_{2}$ . This section presents
a unified computational algorithm and an associated flowchart developed
to accomplish numerical implementation of the two proposed methods.

\subsubsection{A Unified Algorithm}

The iterative process for constructing an adaptive-sparse PDD approximation,
whether full or partial, comprises two main stages: (1) continue incrementing
the polynomial order $m_{u}$ for a chosen component function $y_{u}(\mathbf{X}_{u})$
unless the criterion $\Delta\tilde{G}_{u,m_{u}}>\epsilon_{2}$ fails;
and (2) continue selecting the component functions $y_{u}(\mathbf{X}_{u})$,
$\emptyset\ne u\subseteq\{1,\cdots,N\}$, unless the criterion $\tilde{G}_{u,m_{u}}>\epsilon_{1}$
fails. These two stages are first executed over all univariate PDD
component functions $y_{u}(\mathbf{X}_{u})$, $|u|=1$, before progressing
to all bivariate component functions $y_{u}(\mathbf{X}_{u})$, $|u|=2$,
and so on, until $|u|=N$ for the fully adaptive-sparse PDD approximation
or until $|u|=S$ for a partially adaptive-sparse PDD approximation,
where $S$ is specified by the user. The implementation details of
the iterative process is described in Algorithm \ref{adaptive-algorithm}
and through the flowchart in Figure \ref{flowchart}.

The first stage of the algorithm presented is predicated on accurate
calculations of the sensitivity indices $\tilde{G}_{u,m_{u}}$ and
$\Delta\tilde{G}_{u,m_{u}}$, which require the variance $\sigma^{2}$
of $y(\mathbf{X})$ as noted by Equations \eqref{sr10} and \eqref{sr11}.
Since there exist an infinite number of expansion coefficients emanating
from all PDD component functions, calculating the variance exactly
from Equation \eqref{sr7} is impossible. To overcome this quandary, the authors
propose to estimate the variance by utilizing all PDD expansion coefficients
available at a juncture of the iterative process. For instance, let
$v\in V$ be an element of the index set $V\subseteq\{1,\cdots,N\}$,
which comprises the subsets of $\{1,\cdots,N\}$ selected so far at
a given step of the iterative process. Then the approximate variance
\begin{equation}
\tilde{\sigma}_{V}^{2}={\displaystyle \sum_{\emptyset\ne v\in V\subseteq\{1,\cdots,N\}}}\:\sum_{{\textstyle {\mathbf{j}_{|v|}\in\mathbb{N}_{0}^{|v|},\left\Vert \mathbf{j}_{|v|}\right\Vert _{\infty}\le m_{v}\atop j_{1},\cdots,j_{|v|}\neq0}}}C_{v\mathbf{j}_{|v|}}^{2}\label{sr15}
\end{equation}
replacing the exact variance $\sigma^{2}$ in Equations \eqref{sr10}
and \eqref{sr11} facilitates an effective iterative scheme for estimating
$\tilde{G}_{u,m_{u}}$ and $\Delta\tilde{G}_{u,m_{u}}$ as well. Equation
\eqref{sr15} was implemented in the proposed algorithm, as explained
in Algorithm \ref{adaptive-algorithm} and Figure \ref{flowchart}.
\begin{algorithm*}[!th]
\caption{\label{adaptive-algorithm}Adaptive-sparse polynomial dimensional
decomposition.}
\begin{algorithmic}
\item Define $S$ \Comment [$S \leftarrow N$ for Fully adaptive]
\item Define $\epsilon_1, \epsilon_2, \epsilon_3$
\For {$|u|\gets 1$ to $S$}
\item $|v| \gets |u|,v\subseteq\{1,\cdots,N\}$
\item $m_v \gets 0$
\Repeat \Comment [continue incrementing the polynomial order $m_v$ unless the ranking of component functions $y_{v}(\boldsymbol{x}_{v})$ converges]
	\item \hspace {5 mm} $m_v \gets m_v + 1$ \Comment [start with the polynomial order $m_v = 1$]
	\item \hspace {5 mm} calculate $C_{vj_v},{\mathbf{j}_{|v|}\in\mathbb{N}_{0}^{|v|},\left\Vert \mathbf{j}_{|v|}\right\Vert _{\infty}\le m_{v}}$ \Comment [from Equation \eqref{7}]
	\item \hspace {5 mm} calculate $\tilde{\sigma}_{V}^{2}\gets{\sum_{\emptyset\ne v\in V\subseteq\{1,\cdots,N\}}}\:\sum_{{\textstyle {\mathbf{j}_{|v|}\in\mathbb{N}_{0}^{|v|},\left\Vert \mathbf{j}_{|v|}\right\Vert _{\infty}\le m_{v}}}}C_{v\mathbf{j}_{|v|}}^{2}$ \Comment [from Equation \eqref{sr15}]
	\item \hspace {5 mm} calculate $\tilde{G}_{v,m_v} \gets \left(\sum_{{\textstyle {\mathbf{j}_{|v|}\in\mathbb{N}_{0}^{|v|},\left\Vert \mathbf{j}_{|v|}\right\Vert _{\infty}\le m_{v}}}}C_{v\mathbf{j}_{|v|}}^{2}\right) / \tilde{\sigma}_{V}^{2}$\Comment [from Equation \eqref{sr10}]
	\item \hspace {5 mm} rank  $y_{v}(\boldsymbol{x}_{v})$: $y_{v^{\left(1\right)}}(\boldsymbol{x}_{v^{\left(1\right)}})$ to $y_{v^{\left(N\right)}}(\boldsymbol{x}_{v^{\left(N\right)}})$ \Comment [from Algorithm 2]
	\item \hspace {5 mm} Get $L$ \Comment [from Algorithm 2]
	\item \hspace {5 mm} $N_{m_u} \gets 0$
		\For {$i \gets 1$ to $L$} \Comment [comparing rankings from $m_u$ with those from $\left(m_u -1\right)$ to check for convergence]
		\item \hspace {10 mm} $R_{m_u}\left(i\right) \gets i$
		\If {$R_{m_{u}-1}\left(i\right)$ = $R_{m_u}\left(i\right)$} $N_{m_u} \gets N_{m_u} + 1$
		\EndIf
		\EndFor
\Until {$N_m / L \ge \epsilon_3$} \Comment [ranking converge]
\For {$l_u \gets 1$ to $L$} \Comment [start the adaptivity algorithm with the highest ranking $|u|-$variate component function]
\item \hspace {5 mm} $u \gets u^{\left(l_u\right)}$
\Repeat \Comment [continue incrementing the polynomial order $m_u$ unless the adaptivity condition $\triangle\tilde{G}_{u,m_u} > \epsilon_2$ fails]
	\item \hspace {10 mm} $m_u \gets m_u + 1$
	\item \hspace {10 mm} calculate $C_{uj_u},{\mathbf{j}_{|u|}\in\mathbb{N}_{0}^{|u|},\left\Vert \mathbf{j}_{|u|}\right\Vert _{\infty}\le m_{u}}$ \Comment [from Equation \eqref{7}]
	\item \hspace {10 mm} calculate $\tilde{\sigma}_{V}^{2}\gets{\sum_{\emptyset\ne v\in V\subseteq\{1,\cdots,N\}}}\:\sum_{{\textstyle {\mathbf{j}_{|v|}\in\mathbb{N}_{0}^{|v|},\left\Vert \mathbf{j}_{|v|}\right\Vert _{\infty}\le m_{v}}}}C_{v\mathbf{j}_{|v|}}^{2}$ \Comment [from Equation \eqref{sr15}]
	\item \hspace {10 mm} calculate $\tilde{G}_{u,m_u} \gets \left(\sum_{{\textstyle {\mathbf{j}_{|v|}\in\mathbb{N}_{0}^{|v|},\left\Vert \mathbf{j}_{|v|}\right\Vert _{\infty}\le m_{v}}}}C_{v\mathbf{j}_{|v|}}^{2}\right) / \tilde{\sigma}_{V}^{2}$\Comment [from Equation \eqref{sr10}]
	\item \hspace {10 mm}  calculate $\triangle\tilde{G}_{u,m_u} \gets \left(\tilde{G}_{u,m_u} - \tilde{G}_{u,m_u-1}\right)/\tilde{G}_{u,m_u-1}$ \Comment [from Equation \eqref{sr11}]
\Until {$\triangle\tilde{G}_{u,m_u}\le \epsilon_2$}
\If {$\tilde{G}_{u,m_u}\le \epsilon_1$} exit \EndIf \Comment [exit the adaptivity algorithm]
\EndFor
\EndFor
\item calculate $y_{\emptyset}$\Comment [from Equation \eqref{1b}]
\end{algorithmic}
\end{algorithm*}

The second stage of the algorithm requires an efficient procedure
for selecting appropriate PDD component functions that are retained
in an adaptive-sparse PDD approximation. For a given $1\le|u|\le N,$
let $y_{u}(\mathbf{X}_{u})$, $\emptyset\ne u\subseteq\{1,\cdots,N\}$
denote all $|u|$-variate non-constant PDD component functions of
$y$. It is elementary to count the number of these component functions
to be $L_{|u|}=\binom{N}{|u|}$. Depending on the tolerance criteria
specified, some or none of these component functions may contribute
towards the resultant PDD approximation. Since the component functions
are not necessarily hierarchically arranged, determining their relative
significance to PDD approximation is not straightforward. Therefore,
additional efforts to rank the component functions are needed, keeping
in mind that the same efforts may be recycled for the PDD approximation.
For this purpose, the authors propose two distinct ranking schemes:
(1) full ranking scheme and (2) a reduced ranking scheme, both exploiting
the global sensitivity index $G_{u}$ as a measure of the significance
of $y_{u}(\mathbf{X}_{u})$. However, since $G_{u}$ is estimated
by its $m_{u}$th-order polynomial approximation $\tilde{G}_{u,m_{u}}$,
any ranking system based on $\tilde{G}_{u,m_{u}}$, where $m_{u}$
is finite, may be in a flux and should hence be carefully interpreted.
This implies that a ranking scheme resulting from $\tilde{G}_{u,m_{u}}$,
whether full or reduced, must be iterated for increasing values of
$m_{u}$ until the ranking scheme converges according to a specified
criterion. In the full ranking scheme, all $|u|$-variate component
functions are re-ranked from scratch for each increment of $m_{u}$
until a converged ranking scheme emerges. Consequently, the full ranking
scheme affords any component function to contribute to the resultant
PDD approximation, provided that the criterion $\tilde{G}_{u,m_{u}}>\epsilon_{1}$
is satisfied only at convergence. In contrast, a subset of $|u|$-variate
component functions, determined from the previous ranking results and
truncations set by the tolerance criterion, are re-ranked for each
increment of $m_{u}$ in the reduced ranking scheme until convergence
is achieved. Therefore, for a component function from the reduced
ranking scheme to contribute to the resultant PDD approximation, the
criterion $\tilde{G}_{u,m_{u}}>\epsilon_{1}$ must be satisfied at
all ranking iterations including the converged one. Therefore, the
full ranking scheme is meticulous, but it is also exhaustive,
rapidly becoming inefficient or impractical when applied to high-dimensional
stochastic responses. The reduced ranking scheme, obtained less rigorously
than the former, is highly efficient and is ideal for solving industrial-scale
high-dimensional problems. A ranking system obtained at $m_{u}=m$,
$2\le m<\infty$, for all $|u|$-variate component functions is considered
to be converged if the ranking discrepancy ratio, defined as the ratio
of the number of ranked positions changed when $m_{u}$ increases
from $m-1$ to $m$ to the number of component functions ranked at
$m_{u}=m-1$, does not exceed the ranking tolerance $0\le\epsilon_{3}\le1$.
The number of component functions ranked in the full ranking scheme
is $L_{|u|}$, the total number of $|u|$-variate component functions,
and is the same for any $m_{u}$ or function $y$. In contrast, the
number of component functions ranked in the reduced ranking scheme,
which is equal to or less than $L_{|u|}$, depends on $m_{u}$, $y$,
and $\epsilon_{1}$. Both ranking schemes are described in Algorithm
\ref{ranking-algorithm}.

\begin{algorithm*}[!th]
\caption{\label{ranking-algorithm}Ranking of component functions.}
\begin{algorithmic}
\item sort $y_{v^{\left(l\right)}}(\boldsymbol{x}_{v^{\left(l\right)}})$: $l=1,\ldots,L$; $l=1$ for largest $\tilde{G}_{v,m_v}$ \Comment [$L=N$ for full ranking, or when $m_v=1$]
\end{algorithmic}
{\bf Truncation for reduced ranking:}
\begin{algorithmic}
\item $l \gets 1$
\While {$\tilde{G}_{v^{\left( l \right)},m_{v^{\left( l \right)}}} > \epsilon_1$} \Comment [truncating the ranking when adaptivity condition $\tilde{G}_{v^{\left( l \right)},m_{v^{\left( l \right)}}} > \epsilon_1$ fails]
\item \hspace {5 mm} $L \gets l$
\item \hspace {5 mm} $l \gets l+1$
\EndWhile
\end{algorithmic}
\end{algorithm*}

\begin{figure*}[tbph]
\begin{centering}
\includegraphics[scale=0.75]{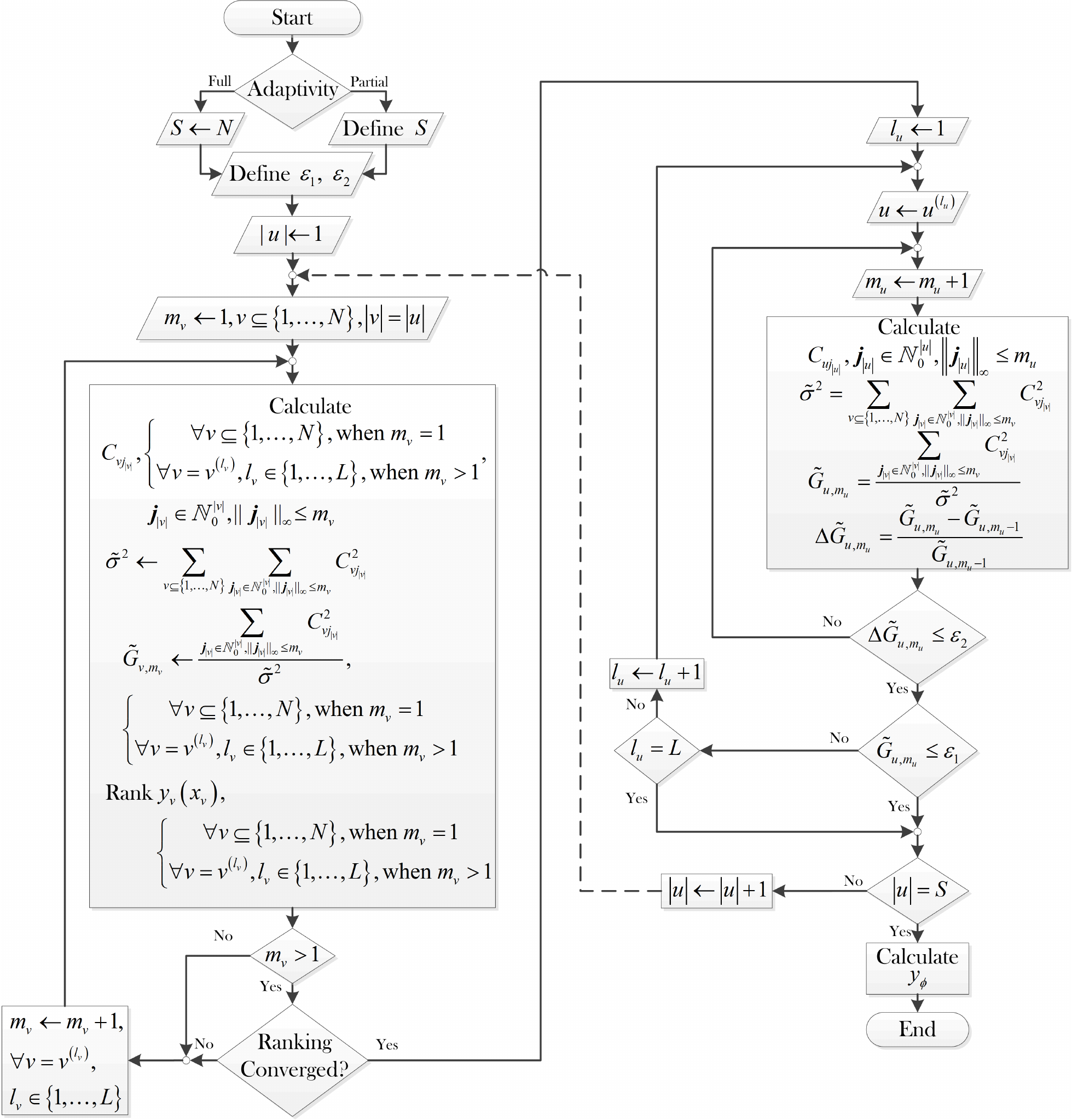}
\par\end{centering}
 \caption{\label{flowchart}A flowchart for constructing an adaptive-sparse
polynomial dimensional decomposition.}
\end{figure*}

\subsubsection{Computational Effort}

For uncertainty quantification, the computational effort is commonly
determined by the total number of original function evaluations. Consequently,
the efforts required by the proposed methods are proportional to the
total numbers of the PDD expansion coefficients retained in the concomitant
approximations and depend on the numerical techniques used to calculate
the coefficients. The numerical evaluation of the expansion coefficients
are discussed in Section 4.

The numbers of coefficients by the fully and partially adaptive-sparse
PDD methods are
\begin{equation}
\begin{array}{lcl}
\bar{K} & = & 1+{\displaystyle \sum_{\emptyset\ne u\subseteq\{1,\cdots,N\}}}\:{\displaystyle \sum_{m_{u}=1}^{\infty}}\:{\displaystyle \sum_{{\textstyle {\left\Vert \mathbf{j}_{|u|}\right\Vert _{\infty}=m_{u},\, j_{1},\cdots,j_{|u|}\neq0\atop \tilde{G}_{u,m_{u}}>\epsilon_{1},\Delta\tilde{G}_{u,m_{u}}>\epsilon_{2}}}}}1\\
 & = & 1+{\displaystyle \sum_{\emptyset\ne u\subseteq\{1,\cdots,N\}}}\:{\displaystyle \sum_{m_{u}=1}^{\infty}}\:{\displaystyle \sum_{\tilde{G}_{u,m_{u}}>\epsilon_{1},\Delta\tilde{G}_{u,m_{u}}>\epsilon_{2}}}\left[m_{u}^{\left|u\right|}-\left(m_{u}-1\right)^{\left|u\right|}\right]
\end{array}\label{sr16}
\end{equation}
and
\begin{equation}
\begin{array}{lcl}
\bar{K}_{S} & = & 1+{\displaystyle \sum_{{\textstyle {\emptyset\ne u\subseteq\{1,\cdots,N\}\atop 1\le|u|\le S}}}}\:{\displaystyle \sum_{m_{u}=1}^{\infty}}\:{\displaystyle \sum_{{\textstyle {\left\Vert \mathbf{j}_{|u|}\right\Vert _{\infty}=m_{u},\, j_{1},\cdots,j_{|u|}\neq0\atop \tilde{G}_{u,m_{u}}>\epsilon_{1},\Delta\tilde{G}_{u,m_{u}}>\epsilon_{2}}}}}1\\
 & = & 1+{\displaystyle \sum_{{\textstyle {\emptyset\ne u\subseteq\{1,\cdots,N\}\atop 1\le|u|\le S}}}}\:{\displaystyle \sum_{m_{u}=1}^{\infty}}\:{\displaystyle \sum_{\tilde{G}_{u,m_{u}}>\epsilon_{1},\Delta\tilde{G}_{u,m_{u}}>\epsilon_{2}}}\left[m_{u}^{\left|u\right|}-\left(m_{u}-1\right)^{\left|u\right|}\right],
\end{array}\label{sr17}
\end{equation}
respectively. It is elementary to show that $\bar{K}_{S}\le\bar{K}$
when $S\le N$ for identical tolerances, as expected, with equality
when $S=N$. Therefore, a partially adaptive-sparse PDD method in
general is more economical than the fully adaptive-sparse PDD method.

What can be inferred from the numbers of coefficients required by
a partially adaptive-sparse PDD method and the existing truncated
PDD method? The following two results, Proposition \ref{pro5} and \ref{pro6}, provide
some insights when the tolerances vanish and when the largest orders
of polynomials are identical.

\begin{pro}
\label{pro5}
If $\epsilon_{1}\to0$, and $\epsilon_{2}\to0$,
then $\bar{K}_{S}\rightarrow\tilde{K}_{S,m}$ as $m\to\infty$.
\end{pro}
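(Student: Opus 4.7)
The plan is to unfold the definition of $\bar{K}_{S}$ in Equation \eqref{sr17} under the limiting regime $\epsilon_{1}\to 0$, $\epsilon_{2}\to 0$, to argue that the two inclusion predicates become trivially satisfied for every $u$ with $1\le|u|\le S$ and every polynomial order $m_u$ up to any prescribed value, and finally to collapse the inner telescoping sum so as to recover the closed form of $\tilde{K}_{S,m}$ in Equation \eqref{eq:K-tilda-Sm}. The argument is essentially combinatorial once the pruning logic has been defused by the vanishing tolerances.

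First I would examine the sensitivity criterion $\tilde{G}_{u,m_u}>\epsilon_{1}$. Since $\tilde{G}_{u,m_u}\ge 0$, letting $\epsilon_{1}\to 0$ reduces this predicate to $\tilde{G}_{u,m_u}>0$. Assuming every PDD component function $y_u$ with $1\le|u|\le S$ depends genuinely on $\mathbf{X}_u$, at least one expansion coefficient $C_{u\mathbf{j}_{|u|}}$ is nonzero, so by Equation \eqref{sr9} one has $\mathbb{E}[y_u^{2}]>0$ and therefore $\tilde{G}_{u,m_u}>0$ for every $m_u\ge 1$. Next I would analyze the incremental criterion $\Delta\tilde{G}_{u,m_u}>\epsilon_{2}$. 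By Equations \eqref{sr10}--\eqref{sr11}, the increment $\tilde{G}_{u,m_u}-\tilde{G}_{u,m_u-1}$ is a sum of squared coefficients indexed by the multi-indices with $\left\Vert \mathbf{j}_{|u|}\right\Vert_{\infty}=m_u$ and $j_{1},\dots,j_{|u|}\neq 0$, and is strictly positive whenever at least one of these coefficients is nonzero. For a generic response this holds at every order, so as $\epsilon_{2}\to 0$ the inner loop never terminates prematurely and $m_u$ may be driven up to any preassigned value $m$.

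Consequently, in the joint limit the admissible index set in Equation \eqref{sr17} collapses to all pairs $(u,m_u)$ with $1\le|u|\le S$ and $1\le m_u\le m$. The inner sum telescopes,
\[
\sum_{m_u=1}^{m}\bigl[m_u^{|u|}-(m_u-1)^{|u|}\bigr]=m^{|u|},
\]
and since there are $\binom{N}{|u|}$ subsets of cardinality $|u|$, aggregating over $|u|=1,\dots,S$ and restoring the unit accounting for the constant component yields
\[
\bar{K}_{S}\;\longrightarrow\;1+\sum_{k=1}^{S}\binom{N}{k}m^{k}=\sum_{k=0}^{S}\binom{N}{k}m^{k}=\tilde{K}_{S,m},
\]
which coincides with Equation \eqref{eq:K-tilda-Sm}. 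Letting $m\to\infty$ delivers the claimed convergence.

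The main obstacle is the genericity hypothesis that no PDD coefficient at any admissible order vanishes identically; in degenerate situations $\Delta\tilde{G}_{u,m_u}$ can equal zero even as $\epsilon_{2}\to 0$, which would halt adaptation at a finite order and leave $\bar{K}_{S}$ strictly below $\tilde{K}_{S,m}$. A fully rigorous statement likely needs either a nondegeneracy assumption on $y$ or a reading of $m$ as the largest polynomial order actually realized by the adaptive process. Handling this technicality, together with verifying that the ranking iterations in Algorithm \ref{adaptive-algorithm} visit every admissible $u$ in the limit, is where I expect the bulk of the care to be needed.
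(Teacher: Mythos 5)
Your proposal is correct and follows essentially the same route as the paper's proof: letting $\epsilon_{1},\epsilon_{2}\to 0$ defuses both pruning criteria in Equation \eqref{sr17}, the retained coefficients are then counted (your telescoping of $m_u^{|u|}-(m_u-1)^{|u|}$ is the same combinatorial identity the paper uses by counting multi-indices with $\left\Vert \mathbf{j}_{|u|}\right\Vert_{\infty}\le m$ directly), and the total is identified with $\lim_{m\to\infty}\tilde{K}_{S,m}$ via Equation \eqref{eq:K-tilda-Sm}. Your closing remark about needing a nondegeneracy assumption (no identically vanishing $\tilde{G}_{u,m_u}$ or increments) is a fair observation of a technicality the paper's formal limit-passing glosses over, but it does not change the argument's structure.
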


\begin{proof}
From Equation \eqref{sr17},
\begin{equation}
\begin{array}{lcl}
{\displaystyle \lim_{\epsilon_{1}\to0 \atop \epsilon_{2}\to0}}\:\bar{K}_{S} & = & 1+{\displaystyle \sum_{{\textstyle {\emptyset\ne u\subseteq\{1,\cdots,N\}\atop 1\le|u|\le S}}}}\:{\displaystyle \sum_{m_{u}=1}^{\infty}}\:{\displaystyle \sum_{{\textstyle {\left\Vert \mathbf{j}_{|u|}\right\Vert _{\infty}=m_{u}\atop j_{1},\cdots,j_{|u|}\neq0}}}}1\\
 & = & 1+{\displaystyle \sum_{{\textstyle {\emptyset\ne u\subseteq\{1,\cdots,N\}\atop 1\le|u|\le S}}}}\:{\displaystyle \sum_{{\textstyle {\mathbf{j}_{|u|}\in\mathbb{N}_{0}^{|u|}\atop j_{1},\cdots,j_{|u|}\neq0}}}}1\\
 & = & {\displaystyle \lim_{m\to\infty}}\left[1+{\displaystyle \sum_{{\textstyle {\emptyset\ne u\subseteq\{1,\cdots,N\}\atop 1\le|u|\le S}}}}\:{\displaystyle \sum_{{\textstyle {\mathbf{j}_{|u|}\in\mathbb{N}_{0}^{|u|},\left\Vert \mathbf{j}_{|u|}\right\Vert _{\infty}\le m\atop j_{1},\cdots,j_{|u|}\neq0}}}}1\right]\\
 & = & {\displaystyle \lim_{m\to\infty}}\left[{\displaystyle \sum_{k=0}^{S}}\binom{N}{k}m^{k}\right]\\
 & = & {\displaystyle \lim_{m\to\infty}}\tilde{K}_{S,m},
\end{array}\label{sr17b}
\end{equation}
where the last line follows from Equation \eqref{eq:K-tilda-Sm}.
\end{proof}

\begin{pro}
\label{pro6}
If
\begin{equation}
m_{\max}=\max_{{\textstyle {\emptyset\ne u\subseteq\{1,\cdots,N\},1\le|u|\le S\atop \tilde{G}_{u,m_{u}}>\epsilon_{1},\Delta\tilde{G}_{u,m_{u}}>\epsilon_{2}}}}m_{u}<\infty\label{sr17c}
\end{equation}
is the largest order of polynomial expansion for any component function
$y_{u}(\mathbf{X}_{u})$, $\emptyset\ne u\subseteq\{1,\cdots,N\}$,
$1\le|u|\le S$, such that $\tilde{G}_{u,m_{u}}>\epsilon_{1},\Delta\tilde{G}_{u,m_{u}}>\epsilon_{2}$,
then $\bar{K}_{S}\le\tilde{K}_{S,m_{\max}}$.
\end{pro}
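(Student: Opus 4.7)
The plan is to establish the inequality by a set-inclusion argument at the level of the index sets being counted on each side. First I would observe that in Equation \eqref{sr17} the tolerance conditions $\tilde{G}_{u,m_u} > \epsilon_1$ and $\Delta\tilde{G}_{u,m_u} > \epsilon_2$ depend only on the pair $(u, m_u)$ and not on the specific multi-index $\mathbf{j}_{|u|}$. Hence, for each $u$ with $1 \le |u| \le S$, the collection $A_u$ of polynomial orders $m_u$ that survive both filters is, by the very definition of $m_{\max}$ in Equation \eqref{sr17c}, a subset of $\{1, 2, \ldots, m_{\max}\}$.

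Next I would invoke the telescoping identity already displayed on the second line of Equation \eqref{sr17}: for fixed $u$ and $m_u$, the count of admissible multi-indices is $m_u^{|u|} - (m_u - 1)^{|u|}$, so summing this nonnegative quantity over $m_u = 1, \ldots, m_{\max}$ collapses to $m_{\max}^{|u|}$. By Equation \eqref{eq:K-tilda-Sm}, this is exactly the per-component count that contributes to $\tilde{K}_{S, m_{\max}}$. Because $A_u \subseteq \{1, \ldots, m_{\max}\}$ and each summand is nonnegative, the partial sum over $A_u$ cannot exceed the full sum. Summing these pointwise inequalities over all $u$ with $1 \le |u| \le S$ and adding the common constant $1$ corresponding to $y_\emptyset$ then yields $\bar{K}_S \le \tilde{K}_{S, m_{\max}}$.

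The main obstacle is purely a matter of bookkeeping: one must confirm that the adaptive sum written as $\sum_{m_u = 1}^{\infty}$ with a tolerance-based restriction is genuinely equivalent to a finite sum over $A_u \subseteq \{1, \ldots, m_{\max}\}$. This equivalence is immediate from the finiteness of $m_{\max}$ stipulated in the hypothesis, so once the notation is unpacked no further estimates are needed and the inequality follows by monotonicity of nonnegative summation.
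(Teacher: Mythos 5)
Your proposal is correct and follows essentially the same route as the paper: the paper likewise bounds the adaptive inner sums by the full count of multi-indices with $\left\Vert \mathbf{j}_{|u|}\right\Vert _{\infty}\le m_{\max}$ and nonzero components (which is exactly your telescoped $m_{\max}^{|u|}$ per component function), and then invokes Equation \eqref{eq:K-tilda-Sm} to identify the resulting total with $\tilde{K}_{S,m_{\max}}$. Your explicit remark that the tolerance filters depend only on $(u,m_u)$ and your telescoping bookkeeping are just a slightly more detailed unpacking of the paper's one-line set-inclusion inequality.
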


\begin{proof}
From Equation \eqref{sr17},
\begin{equation}
\begin{array}{lcl}
\bar{K}_{S} & = & 1+{\displaystyle \sum_{{\textstyle {\emptyset\ne u\subseteq\{1,\cdots,N\}\atop 1\le|u|\le S}}}}\:{\displaystyle \sum_{m_{u}=1}^{\infty}}\:{\displaystyle \sum_{{\textstyle {\left\Vert \mathbf{j}_{|u|}\right\Vert _{\infty}=m_{u},\, j_{1},\cdots,j_{|u|}\neq0\atop \tilde{G}_{u,m_{u}}>\epsilon_{1},\Delta\tilde{G}_{u,m_{u}}>\epsilon_{2}}}}}1\\
 & \le & 1+{\displaystyle \sum_{{\textstyle {\emptyset\ne u\subseteq\{1,\cdots,N\}\atop 1\le|u|\le S}}}}\:{\displaystyle \sum_{{\textstyle {\mathbf{j}_{|u|}\in\mathbb{N}_{0}^{|u|},\left\Vert \mathbf{j}_{|u|}\right\Vert _{\infty}\le m_{\max}\atop j_{1},\cdots,j_{|u|}\neq0}}}}1\\
 & = & {\displaystyle \sum_{k=0}^{S}}\binom{N}{k}m_{\max}^{k}\\
 & = & \tilde{K}_{S,m_{\max}},
\end{array}\label{sr17d}
\end{equation}
where the last line follows from Equation \eqref{eq:K-tilda-Sm}.
\end{proof}

According to Proposition \ref{pro6}, the partially adaptive-sparse PDD approximation
for non-trivial tolerances should be computationally more efficient
than the truncated PDD approximation. However, the computational efforts by both approximations depend on the numerical technique employed to estimate the associated expansion coefficients. For instance, suppose that a full-grid dimension-reduction integration with its own truncation $R = S$, to be explained in Section 4, is applied to calculate all $\tilde{K}_{S,m_{\max}}$ expansion coefficients to achieve the accuracy of an $S$-variate, $m_{\max}$th-order PDD approximation. Then the requisite number of function evaluations is $S$th-order polynomial with respect to $N$, the size of the stochastic problem. The partially adaptive-sparse PDD approximation, while retaining a similar accuracy, is expected to markedly reduce the number of function calls. This issue will be further explored in Example 3 of the Numerical Examples section.

\section{Calculation of Expansion Coefficients}

The determination of the expansion coefficients $y_{\emptyset}$ and
$C_{u\mathbf{j}_{|u|}}$ in Equations \eqref{1b} and \eqref{7} involves
various $N$-dimensional integrals over $\mathbb{R}^{N}$. For large
$N$, a full numerical integration employing an $N$-dimensional tensor
product of a univariate quadrature formula is computationally prohibitive
and is, therefore, ruled out. Two new alternative numerical techniques
are proposed to estimate the coefficients accurately and efficiently.

\subsection{Dimension-Reduction Integration}

The dimension-reduction integration, developed by Xu and Rahman \cite{xu04},
entails approximating a high-dimensional integral of interest by a
finite sum of lower-dimensional integrations. For calculating the
expansion coefficients $y_{\emptyset}$ and $C_{u\mathbf{j}_{|u|}}$,
this is accomplished by replacing the $N$-variate function $y$ in
Equations \eqref{1b} and \eqref{7} with an $R$-variate RDD approximation
at a chosen reference point, where $R\le N$ \cite{xu04,xu05}. The result
is a reduced integration scheme, requiring evaluations of at most
$R$-dimensional integrals.

Given a reference point $\mathbf{c}=(c_{1},\cdots,c_{N})\in\mathbb{R}^{N}$
and RDD component functions $w_{\emptyset}$ and $w_{u}(\mathbf{X}_{u};\mathbf{c})$
described by Equations \eqref{5b} and \eqref{5c}, let $\hat{y}_{R}(\mathbf{X};\mathbf{c})$
(Equation \eqref{sr2}) denote an $R$-variate RDD approximation of $y(\mathbf{X})$. Replacing
$y(\mathbf{x})$ in Equations \eqref{1b} and \eqref{7} with $\hat{y}_{R}(\mathbf{x};\mathbf{c})$, the coefficients $y_{\emptyset}$ and $C_{u\mathbf{j}_{|u|}}$
are estimated from \cite{xu04}
\begin{equation}
y_{\emptyset}\cong{\displaystyle \sum_{i=0}^{R}}(-1)^{i}{N-R+i-1 \choose i}\sum_{{\textstyle {v\subseteq\{1,\cdots,N\}\atop |v|=R-i}}}\!\int_{\mathbb{R}^{|v|}}{\displaystyle y(\mathbf{x}_{v},\mathbf{c}_{-v})}f_{\mathbf{X}_{v}}(\mathbf{x}_{v})d\mathbf{x}_{v}\label{sr18}
\end{equation}
 and
\begin{equation}
\begin{split}
C_{u\mathbf{j}_{|u|}}&\cong{\displaystyle \sum_{i=0}^{R}}(-1)^{i}{N-R+i-1 \choose i}\\
&\sum_{{\textstyle {v\subseteq\{1,\cdots,N\}\atop |v|=R-i,u\subseteq v}}}\!\int_{\mathbb{R}^{|v|}}{\displaystyle y(\mathbf{x}_{v},\mathbf{c}_{-v})\psi_{u\mathbf{j}_{|u|}}(\mathbf{x}_{u})}f_{\mathbf{X}_{v}}(\mathbf{x}_{v})d\mathbf{x}_{v},\label{sr19}
\end{split}
\end{equation}
respectively, requiring evaluation of at most $R$-dimensional integrals.
The reduced integration facilitates calculation of the coefficients
approaching their exact values as $R\to N$, and is significantly
more efficient than performing one $N$-dimensional integration, particularly
when $R\ll N$. Hence, the computational effort is significantly decreased
using the dimension-reduction integration. For instance, when $R=1$
or $2$, Equations \eqref{sr18} and \eqref{sr19} involve one-, or
at most, two-dimensional integrations, respectively. Nonetheless,
numerical integrations are still required for performing various $|v|$-dimensional
integrals over $\mathbb{R}^{|v|}$, where $0\le|v|\le R$. When $R>1$,
the multivariate integrations involved can be conducted using full-
or sparse-grids, as follows.

\subsubsection{Full-Grid Integration}

The full-grid dimension-reduction integration entails constructing
a tensor product of the underlying univariate quadrature rules. For
a given $v\subseteq\{1,\cdots,N\}$, $1<|v|\le R$, let $v=\{i_{1},\cdots i_{|v|}\}$,
where $1\le i_{1}<\cdots<i_{|v|}\le N$. Denote by $\{x_{i_{p}}^{(1)},\cdots,x_{i_{p}}^{(n_{v})}\}\subset\mathbb{R}$
a set of integration points of $x_{i_{p}}$ and by $\{w_{i_{p}}^{(1)},\cdots,w_{i_{p}}^{(n_{v})}\}$
the associated weights generated from a chosen univariate quadrature
rule and a positive integer $n_{v}\in\mathbb{N}$. Denote by $P^{(n_{v})}=\times_{p=1}^{p=|v|}\{x_{i_{p}}^{(1)},\cdots,x_{i_{p}}^{(n_{v})}\}$
the rectangular grid consisting of all integration points generated
by the variables indexed by the elements of $v$. Then the coefficients
using dimension-reduction numerical integration with a full-grid are
approximated by
\begin{equation}
y_{\emptyset}\cong{\displaystyle \sum_{i=0}^{R}}(-1)^{i}{N-R+i-1 \choose i}\sum_{{\textstyle {v\subseteq\{1,\cdots,N\}\atop |v|=R-i}}}\!\sum_{\mathbf{k}_{|v|}\in P^{(n_{v})}}w^{(\mathbf{k}_{|v|})}y(\mathbf{x}_{v}^{(\mathbf{k}_{|v|})},\mathbf{c}_{-v}),\label{sr20}
\end{equation}
\begin{equation}
\begin{split}
C_{u\mathbf{j}_{|u|}}&\cong{\displaystyle \sum_{i=0}^{R}}(-1)^{i}{N-R+i-1 \choose i}\\
&\sum_{{\textstyle {v\subseteq\{1,\cdots,N\}\atop |v|=R-i,u\subseteq v}}}\!\sum_{\mathbf{k}_{|v|}\in P^{(n_{v})}}w^{(\mathbf{k}_{|v|})}y(\mathbf{x}_{v}^{(\mathbf{k}_{|v|})},\mathbf{c}_{-v})\psi_{u\mathbf{j}_{|u|}}(\mathbf{x}_{u}^{(\mathbf{k}_{|u|})}),\label{sr21}
\end{split}
\end{equation}
where $\mathbf{x}_{v}^{(\mathbf{k}_{|v|})}=\{x_{i_{1}}^{(k_{1})},\cdots,x_{i_{|v|}}^{(k_{|v|})}\}$
and $w^{(\mathbf{k}_{|v|})}=\prod_{p=1}^{p=|v|}w_{i_{p}}^{(k_{p})}$
is the product of integration weights generated by the variables indexed
by the elements of $v$. For independent coordinates of $\mathbf{X}$,
as assumed here, a univariate Gauss quadrature rule is commonly used,
where the integration points and associated weights depend on the
probability distribution of $X_{i}$. They are readily available,
for example, the Gauss-Hermite or Gauss-Legendre quadrature rule,
when $X_{i}$ follows Gaussian or uniform distribution \cite{gautschi04}. For
an arbitrary probability distribution of $X_{i}$, the Stieltjes procedure
\cite{gautschi04} can be employed to generate the measure-consistent Gauss
quadrature formulae \cite{gautschi04}. An $n_{v}$-point Gauss quadrature rule
exactly integrates a polynomial of total degree at most $2n_{v}-1$.

The calculation of $y_{\emptyset}$ and $C_{u\mathbf{j}_{|u|}}$ from
Equations \eqref{sr20} and \eqref{sr21} involves at most $R$-dimensional
tensor products of an $n_{v}$-point univariate quadrature rule, requiring
the following deterministic responses or function evaluations: $y(\mathbf{c})$,
$y(\mathbf{x}_{v}^{(\mathbf{j}_{|v|})},\mathbf{c}_{-v})$ for $i=0,\cdots,R$,
$v\subseteq\{1,\cdots,N\}$, $|v|=R-i$, and $\mathbf{j}_{|v|}\in P^{(n_{v})}$.
Accordingly, the total cost for estimating the PDD expansion coefficients
entails
\begin{equation}
L_{FG}={\displaystyle \sum_{i=0}^{R}}\sum_{{\textstyle {v\subseteq\{1,\cdots,N\}\atop |v|=R-i}}}n_{v}^{|v|}\label{sr22}
\end{equation}
function evaluations, encountering a computational complexity that
is $R$th-order polynomial $-$ for instance, linear or quadratic
when $R=1$ or $2$ $-$ with respect to the number of random variables
or integration points. For $R<N$, the technique alleviates the curse
of dimensionality to an extent determined by $R$.

\subsubsection{Sparse-Grid Integration}

Although the full-grid dimension-reduction integration has been successfully
applied to the calculation of the PDD expansion coefficients in the
past \cite{rahman08,rahman09,rahman10,rahman11}, it faces a major drawback when the polynomial order
$m_{u}$ for a PDD component function $y_{u}$ needs to be modulated
for adaptivity. As the value of $m_{u}$ is incremented by one, a
completely new set of integration points is generated by the univariate
Gauss quadrature rule, rendering all expensive function evaluations
on prior integration points as useless. Therefore, a nested Gauss
quadrature rule, such as the fully symmetric interpolatory rule,
that is capable of exploiting dimension-reduction integration is proposed.

\paragraph{Fully symmetric interpolatory rule}
The fully symmetric interpolatory (FSI) rules
developed by Genz and his associates \cite{genz86,genz96}, is a sparse-grid
integration technique for performing high-dimensional numerical integration.
Applying this rule to the $|v|$-dimensional integrations in Equations \eqref{sr18}
and \eqref{sr19}, the PDD expansion coefficients are approximated
by
\begin{equation}
\begin{split}
y_{\emptyset} & \cong{\displaystyle \sum_{i=0}^{R}}(-1)^{i}{N-R+i-1 \choose i}\sum_{{\textstyle {v\subseteq\{1,\cdots,N\}\atop |v|=R-i}}}\!\sum_{\mathbf{p}_{|v|}\in P^{\left(\tilde{n}_{v},|v|\right)}}w_{\mathbf{p}_{|v|}}\\
& \sum_{\mathbf{\mathbf{q}_{|v|}}\in\Pi_{\mathbf{p}_{|v|}}}\sum_{\mathbf{t}_{|v|}}y\left(t_{i_{1}}\alpha_{q_{i_{1}}},\cdots,t_{i_{|v|}}\alpha_{q_{i_{|v|}}},\mathbf{c}_{-v}\right),\label{sr23}
\end{split}
\end{equation}
\begin{equation}
\begin{split}
C_{u\mathbf{j}_{|u|}} & \cong{\displaystyle \sum_{i=0}^{R}}(-1)^{i}{N-R+i-1 \choose i}\sum_{{\textstyle {v\subseteq\{1,\cdots,N\}\atop |v|=R-i,u\subseteq v}}}\!\sum_{\mathbf{p}_{|v|}\in P^{\left(\tilde{n}_{v},|v|\right)}}w_{\mathbf{p}_{|v|}}\\
 &\!\!\!\!\!\!\!\! \sum_{\mathbf{\mathbf{q}_{|v|}}\in\Pi_{\mathbf{p}_{|v|}}}\!\!\! \sum_{\mathbf{t}_{|v|}}\!y\left(t_{i_{1}}\alpha_{q_{i_{1}}},\cdots,t_{i_{|v|}}\alpha_{q_{i_{|v|}}},\mathbf{c}_{-v}\right)\psi_{u\mathbf{j}_{|u|}}\left(t_{i_{1}}\alpha_{q_{i_{1}}},\cdots,t_{i_{|u|}}\alpha_{q_{i_{|u|}}}\right),\label{sr24}
\end{split}
\end{equation}
where $v=\{i_{1},\cdots i_{|v|}\}$, $\mathbf{t}_{|v|}=(t_{i_{1}},\cdots,t_{i_{|v|}})$,
$\mathbf{p}_{|v|}=(p_{i_{1}},\cdots,p_{i_{|v|}})$,
\begin{equation}
P^{\left(\tilde{n}_{v},|v|\right)}=\{\mathbf{p}_{|v|}:\tilde{n}_{v}\ge p_{i_{1}}\ge\cdots\ge p_{i_{|v|}}\ge0,\left\Vert \mathbf{p}_{|v|}\right\Vert \le \tilde{n}_{v}\}\label{sr25}
\end{equation}
with $\left\Vert \mathbf{p}_{|v|}\right\Vert :=\sum_{r=1}^{|v|}p_{i_{r}}$
is the set of all distinct $|v|$-partitions of the integers $0,1,\cdots,\tilde{n}_{v}$,
and $\Pi_{\mathbf{p}_{|v|}}$ is the set of all permutations of $\mathbf{p}_{|v|}$.
The innermost sum over $\mathbf{t}_{|v|}$ is taken over all of the
sign combinations that occur when $t_{i_{r}}=\pm1$ for those values
of $i_{r}$ with generators $\alpha_{q_{i_{r}}}\ne0$ \cite{genz96}. The
weight
\begin{equation}
w_{\mathbf{\mathbf{p}_{|v|}}}=2^{-K}\sum_{\left\Vert \mathbf{\mathbf{k}_{|v|}}\right\Vert \leqslant \tilde{n}_{v}-\left\Vert \mathbf{p}_{|v|}\right\Vert }{\displaystyle \prod_{r=1}^{|v|}}{\displaystyle \frac{a_{k_{i_{r}}+p_{i_{r}}}}{{\displaystyle \prod_{j=0,j\ne p_{i_{r}}}^{k_{i_{r}}+p_{i_{r}}}}\left(\alpha_{p_{i_{r}}}^{2}-\alpha_{j}^{2}\right)}},\label{sr26}
\end{equation}
where $K$ is the number of nonzero components in $\mathbf{p}_{|v|}$
and $a_{i}$ is a constant that depends on the probability measure
of $X_{i},$ for instance,
\begin{equation}
a_{i}=\frac{1}{\sqrt{2\pi}}\int_{\mathbb{R}}\exp\left(-\frac{\xi^{2}}{2}\right)\prod_{j=0}^{i-1}\left(\xi^{2}-\alpha_{j}^{2}\right)d\xi\label{sr27}
\end{equation}
for $i>0$ and $a_{0}=1$ when $X_{i}$ follows the standard Gaussian
distribution \cite{genz96}. An $\tilde{n}_{v}$-parameter FSI rule exactly integrates
a polynomial of degree at most $2\tilde{n}_{v}+1$.

\paragraph{Extended fully symmetric interpolatory rule}
The number of function evaluations by the original FSI rule \cite{genz86}
increases rapidly as $|v|$ and $\tilde{n}_{v}$ increase. To enhance the
efficiency, Genz and Keister \cite{genz96} proposed an extended FSI rule
in which the function evaluations are significantly reduced if the
generator set is chosen such that some of the weights $w_{\mathbf{\mathbf{p}_{|v|}}}$
are \emph{zero}. The pivotal step in constructing such FSI rule is to extend
a $(2\beta+1)$-point Gauss-Hermite quadrature rule by adding $2\gamma$
points or generators $\pm\alpha_{\beta+1},\pm\alpha_{\beta+2},\ldots,\pm\alpha_{\beta+\gamma}$
with the objective of maximizing the degree of polynomial exactness
of the extended rule, where $\beta\in\mathbb{N}$ and $\gamma\in\mathbb{N}$.
Genz and Keister \cite{genz96} presented a special case of initiating the
FSI rule from the univariate Gauss-Hermite rule over the interval
$\left(-\infty,\infty\right)$. The additional generators in this
case are determined as roots of the monic polynomial $\zeta^{2\gamma}+t_{\gamma-1}\zeta^{2\gamma-1}+\cdots+t_{0}$,
where the coefficients $t_{\gamma-1},\cdots,t_{0}$ are obtained by
invoking the condition
\begin{equation}
\frac{1}{\sqrt{2\pi}}\int_{\mathbb{R}}\exp\left(-\frac{\xi^{2}}{2}\right)\prod_{j=0}^{\beta}\xi^{2b}\left(\xi^{2}-\alpha_{j}^{2}\right)d\xi=0,\label{sr28}
\end{equation}
where $\gamma>\beta$. A new set of generators is propagated based
on the prior rule and, therefore, as the polynomial degree of exactness
of the rule increases, all the previous points and the expensive function
evaluations over those points are preserved. A remarkable feature
of the extended FSI rule is that the choice of generators is such
that some of the weights \textit{$w_{\mathbf{\mathbf{p}_{|v|}}}=0$}
in each step of the extension \cite{genz96}, thus eliminating the need
for function evaluations at the integration points corresponding to
\emph{zero} weights, making the extended FSI rule significantly more efficient
than its earlier version.

Since RDD is tied with the reference point, the dimension-reduction integration, whether full-grid or sparse-grid, to calculate the PDD expansion coefficients also depends on $\mathbf{c}$. However, from past experience \cite{zabaras10, rahman08, rahman09, xu04, karniadakis12}, very accurate estimates of the expansion coefficients were obtained when $\mathbf{c}$ is selected as the mean value of $\mathbf{X}$. A more rigorous approach entails finding an optimal reference point, but it will require additional function evaluations and hence may render the dimension-reduction technique impractical for solving high-dimensional problems.

\subsubsection{Integration Points}

The number of integration points determines the computational expense
incurred in calculating the PDD expansion coefficients. Therefore, it
is instructive to compare the numbers of points required by full-
or sparse-grid dimension-reduction integrations. To do so, consider
the efforts in performing a $|v|$-dimensional integration in Equation
\eqref{sr18} or \eqref{sr19} over the interval $\left(-\infty,\infty\right)$
by three different numerical techniques: (1) the full-grid integration
technique; (2) the sparse-grid integration technique using the extended
FSI rule; and (3) the sparse-grid integration technique using Smolyak's
algorithm \cite{novak99}. The Smolyak's algorithm is included because
it is commonly used as a preferred sparse-grid numerical technique
for approximating high-dimensional integrals. Define an integer $l\in\mathbb{N}$
such that all three techniques can exactly integrate a polynomial
function of total degree $2l-1$. For instance, when
$l=3$, all three techniques exactly integrate a quintic polynomial.
Figure \ref{SparseFig1} presents a comparison of the total numbers
of integration points in a two-dimensional grid, that is, when $|v|=2$,
for $l$ ranging from one through five by the three distinct
multivariate integration techniques. Each plot illustrates two numbers: the first number indicates the number of integration points required at the given value of $l$; the second number, inside the parenthesis, indicates the total number of cumulative integration points added up to the value of $l$. It is imperative to add the integration points from all the previous values of $l$ as it reflects the total number of function evaluations required in an adaptive algorithm. For the full-grid integration, the two numbers are different for all $l>1$, indicating a lack of nesting of the integration points. Whereas in the sparse-grid with extended FSI rule, the two numbers are equal for all $l$, reflecting the fully nested integration points in this rule. As $l$ increments,
a completely new set of points is introduced in the full-grid integration,
rendering the prior points useless. However, for fairness in comparison,
it is necessary to consider all points from prior values of $l$ as the expensive
function evaluations have already been performed. Therefore, Figure
\ref{SparseFig1} captures the cumulative numbers of integration points
as $l$ increases steadily. For values of $l$ up to two, all three techniques
require the same number of integration points. However, differences
in the numbers of points start to appear in favor of the extended
FSI rule when $l$ exceeds two, making it the clear favorite
among all three techniques for high-order numerical integration. The
Smolyak's algorithm, which is not nested, is the least efficient of the three techniques. The extended FSI rule, in contrast, is fully nested, establishing a principal advantage over Smolyak's algorithm for adaptive
numerical integration.

\begin{figure*}[tbph]
\begin{centering}
\includegraphics[scale=0.8]{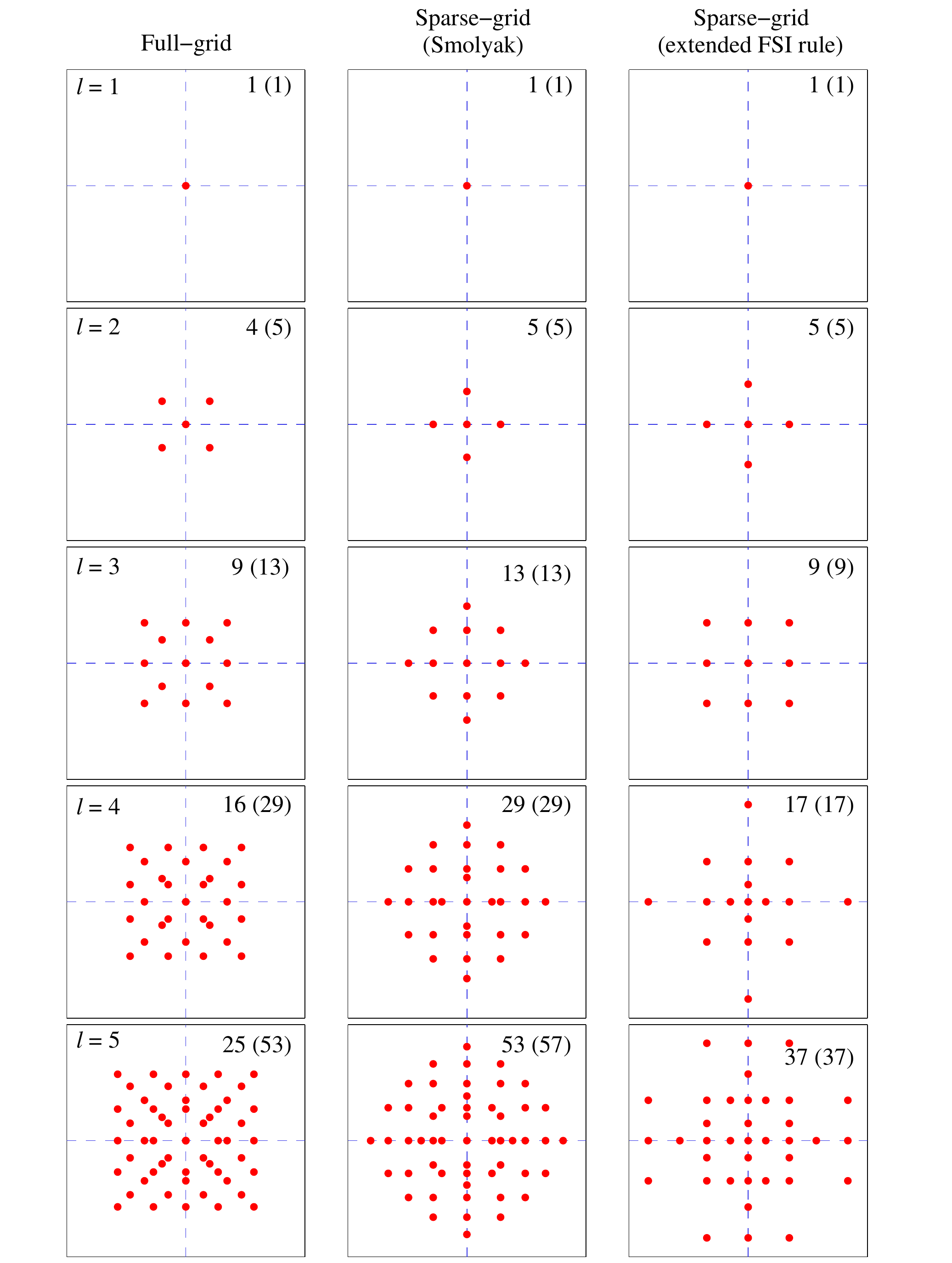}
\par\end{centering}
 \caption{\label{SparseFig1}Gauss-Hermite integration points
in a two-dimensional grid by the full-grid technique, sparse-grid
with the extended FSI rule, and sparse-grid with Smolyak's algorithm
for various levels. Note: each grid is plotted over a square with axes ranging from $-5$ to $5$.}
\end{figure*}

Table \ref{IntegPointsTable} lists the number of integration points required at the integration rule corresponding to a given value of $l$, for $2\leq |v| \leq 10$ and $2\leq l \leq5$. It is important to note that the number of integration points listed is not cumulative. It appears that for higher-dimensional integrations, that is, for $|v|>2$, the extended FSI rule is markedly more efficient than full-grid or other sparse-grid techniques even for the non-cumulative points. The efficiency of extended FSI rule is more pronounced for cumulative number of integration points. For further details, the reader is referred to the work of Genz and Keister
\cite{genz96}, who examined the extended FSI rule for dimensions up to
20.
\begin{table*}[tbph]
\centering
\begin{threeparttable}
\caption{\label{IntegPointsTable}Number of integration points in various $|v|$-dimensional integration techniques, each technique exactly integrates polynomials of total order $2l-1$.}
\begin{centering}
\begin{tabular}{crrrrrrrrr}
\hline
 & \multicolumn{9}{c}{$|v|$}\tabularnewline
\cline{2-10}
$l$ & 2 & 3 & 4 & 5 & 6 & 7 & 8 & 9 & 10\tabularnewline
\hline
 & \multicolumn{9}{c}{(a) Full-grid}\tabularnewline
\cline{2-10}
2 & 4 & 8 & 16 & 32 & 64 & 128 & 256 & 512 & 1024\tabularnewline
3 & 9 & 27 & 81 & 243 & 729 & 2187 & 6561 & 19683 & 59049\tabularnewline
4 & 16 & 64 & 256 & 1024 & 4096 & 16384 & 65536 & 262144 & 1048576\tabularnewline
5 & 25 & 125 & 625 & 3125 & 15625 & 78125 & 390625 & 1953125 & 9765625\tabularnewline
\cline{1-10}
 & \multicolumn{9}{c}{(b) Sparse-grid (Smolyak)}\tabularnewline
\cline{2-10}
2 & 5 & 7 & 9 & 11 & 13 & 15 & 17 & 19 & 21\tabularnewline
3 & 13 & 25 & 41 & 61 & 85 & 113 & 145 & 181 & 221\tabularnewline
4 & 29 & 69 & 137 & 241 & 389 & 589 & 849 & 1177 & 1581\tabularnewline
5 & 53 & 165 & 385 & 781 & 1433 & 2437 & 3905 & 5965 & 8761\tabularnewline
\cline{1-10}
 & \multicolumn{9}{c}{(c) Sparse-grid (extended FSI rule)}\tabularnewline
\cline{2-10}
2 & 5 & 7 & 9 & 11 & 13 & 15 & 17 & 19 & 21\tabularnewline
3 & 9 & 19 & 33 & 51 & 73 & 99 & 129 & 163 & 201\tabularnewline
4 & 17 & 39 & 81 & 151 & 257 & 407 & 609 & 871 & 1201\tabularnewline
5 & 37 & 93 & 201 & 401 & 749 & 1317 & 2193 & 3481 & 5301\tabularnewline
\hline
\end{tabular}
\par\end{centering}
\end{threeparttable}
\end{table*}

\subsection{Quasi Monte Carlo Simulation}

The basic idea of the quasi MCS is to replace the random or pseudo-random
samples in crude MCS by well-chosen deterministic samples that are
highly equidistributed \cite{niederreiter92}. The qausi Monte Carlo samples are
often selected from a low-discrepancy sequence \cite{faure81,halton60,niederreiter92,sobol67} or by a
lattice rule \cite{sloan94} to minimize the integration errors. The estimation
of the PDD expansion coefficients, which are high-dimensional integrals,
comprises three simple steps: (1) generate a low-discrepancy point
set $\mathcal{P}_{L}:=\{\mathbf{u}^{(k)}\in[0,1]^{N},\; k=1,\cdots,L\}$
of size $L\in\mathbb{N}$; (2) map each sample from $\mathcal{P}_{L}$
to the sample $\mathbf{x}^{(k)}\in\mathbb{R}^{N}$ following the probability
measure of the random input $\mathbf{X}$; and (3) approximate the
coefficients by
\begin{equation}
y_{\emptyset}\cong\frac{1}{L}\sum_{k=1}^{L}y\left(\mathbf{x}^{(k)}\right),\label{sr29}
\end{equation}
\begin{equation}
C_{u\mathbf{j}_{|u|}}\cong\frac{1}{L}\sum_{k=1}^{L}y\left(\mathbf{x}^{(k)}\right)\psi_{u\mathbf{j}_{|u|}}\left(\mathbf{x}_{u}^{(k)}\right).\label{sr30}
\end{equation}
The well-known Koksma\textendash{}Hlawka inequality reveals that the
error committed by the quasi MCS is bounded by the variation of the
integrand in the sense of Hardy and Krause and the star-discrepancy,
a measure of uniformity, of the point set $\mathcal{P}_{L}$ \cite{niederreiter92}.
Therefore, constructing a point set with star-discrepancy as small
as possible and seeking variance reduction of the integrand are vital
for the success of the quasi MCS. It should be mentioned here that
many authors, including Halton \cite{halton60}, Faure \cite{faure81}, Niederreiter
\cite{niederreiter92}, and Sobol \cite{sobol67}, and Wang \cite{wang00}, have extensively
studied how to generate the best low-discrepancy point sets and to
facilitate variance reduction. For a bounded variation of the integrand,
the quasi MCS has a theoretical error bound $O(L^{-1}(\log L)^{N}$
compared with the probabilistic error bound $O(L^{-1/2})$ of crude
MCS, indicating significantly faster convergence of the quasi MCS
than crude MCS.

The two proposed techniques for calculating the PDD coefficients represent two broad categories of numerical integration: the quadrature-based methods and the sampling-based methods. However, the calculation of PDD coefficients is not limited to only these two techniques. Furthermore, the relative accuracy or efficiency of one technique over the other depends on the dimension of the stochastic problem. For hundreds or thousands of random variables, a sampling-based technique is generally preferred over a quadrature-based technique, as the former is relatively insensitive to the problem size.

\section{Numerical Examples}

\label{examples}

Three numerical examples are put forward to illustrate the adaptive-sparse
PDD methods developed in calculating various probabilistic characteristics
of random mathematical functions and random eigensolutions of stochastic
dynamical systems. A principal objective is to compare the performance of the proposed adaptive-sparse PDD methods with that of the existing truncated PDD method. Readers interested in contrasting the truncated PDD method with the PCE \cite{ghanem91} and other classical methods are referred to the authors' prior work \cite{rahman07,rahman08,rahman09,rahman11}.

Classical Legendre polynomials were used to define
the orthonormal polynomials in Example 1, and all expansion coefficients
were determined analytically. In Examples 2 and 3, all original random
variables were transformed into standard Gaussian random variables,
facilitating the use of classical Hermite orthonormal polynomials
as bases. Since Example 2 consists of only nine input random variables,
the expansion coefficients were estimated using a nine-dimensional
tensor product of five-point univariate Gauss-Hermite quadrature rule.
The expansion coefficients in Example 3 were approximated by both
the full-grid dimension-reduction integration and sparse-grid dimension-reduction
integration with the extended FSI rule, where $R=S$ and $\mathbf{c}$ is the mean of $\mathbf{X}$. The sample sizes for crude
MCS in Example 2 is $10^{6}$. In Example 3, the sample
size for crude MCS is $50,000$, and for the embedded MCS, whether the
truncated or adaptive-sparse PDD method, the sample size is $10^{6}$.

\subsection{Example 1: A Polynomial Function}

Consider the polynomial function
\begin{equation}
y\left(\mathbf{X}\right)=\frac{{\displaystyle \prod_{i=1}^{N}\left(\frac{3}{i}X_{i}^{5}+1\right)}}{\mathbb{E}{\displaystyle \left[\prod_{i=1}^{N}\left(\frac{3}{i}X_{i}^{5}+1\right)\right]}}, \label{ex1}
\end{equation}
where $X_{i}$, $i=1,\cdots,N$, are independent and identical random
variables, each following the standard uniform distribution over $\left[0,1\right]$.
Since the coefficient of $X_{i}^{5}$ is inversely proportional to
$i$, the first and last random variables have the largest and least
influence on $y$. From elementary calculations, the exact mean and
variance of $y$ are 1 and
\begin{equation}
{\displaystyle \prod_{i=1}^{N}}\left({\displaystyle \frac{25}{11\left(1+2i\right)^{2}}+1}\right) - 1, \label{var1}
\end{equation}
respectively. All PDD expansion coefficients were calculated analytically.
Therefore, the ranking of component functions was performed once and
for all, avoiding any role of the ranking scheme in this particular
example. The numerical results that follow in the remainder of this subsection
were obtained for $N=5$.

Figure \ref{ex1-fig1} shows how the relative errors, defined as the
ratio of the absolute difference between the exact (Expression \eqref{var1})
and approximate (Equation \eqref{11}) variances of $y$ to the exact
variance, committed by $S$-variate, $m$-th order PDD approximations
vary with increasing polynomial order $m$. The five plots of univariate
($S=1$) to pentavariate ($S=5$) PDD approximations clearly show
that the error drops monotonically with respect to $m$ regardless
of $S$. When $m$ reaches five, the pentavariate PDD approximation
does not perpetrate any error, producing the exact variance of $y$
as expected. In contrast, the relative errors in variance caused by
fully adaptive-sparse PDD approximations (Equation \eqref{sr13}), also
illustrated in Figure \ref{ex1-fig1} for specified tolerances ranging
from $10^{-9}$ to $10^{-3}$, do not rely on $S$ or $m$, as the
degrees of interaction and polynomial orders are adaptively modulated
in the concomitant approximations. The adaptive-sparse PDD approximations
with tolerances equal to $10^{-3}$ and $10^{-4}$ yield relative
errors in variance marginally higher than the tolerance values; however,
the relative errors achieved are invariably smaller than all respective
values of the subsequent tolerances, demonstrating a one-to-one relationship
between the tolerance and relative error attained in calculating the
variance. As the tolerance decreases, so does the relative error. While a traditional truncated PDD approximation provides
options to increase the values of $S$ and/or $m$ for reducing the
relative error, the user remains blinded to the outcome of such an
action. The adaptive-sparse PDD method, in the form of tolerances,
provides a direct key to regulate the accuracy of the resultant approximation.

Figure \ref{ex1-fig2} displays the increase in number of PDD expansion
coefficients required by truncated (Equation \eqref{eq:K-tilda-Sm})
and fully adaptive-sparse (Equation \eqref{sr16}) PDD methods in order
to achieve a user-specified relative error in variance ranging from
$10^{-1}$ to $10^{-12}$. The relative error decreases from left
to right along the horizontal axis of the plot. The plot of the truncated
PDD approximation is generated by trial-and-error, increasing the value
of either $S$ or $m$ until the desired relative error is achieved
and then counting the total number of coefficients required to attain
that relative error. For obtaining the plot of the adaptive-sparse
PDD approximation, the tolerance values were reduced monotonically
and the corresponding total number of coefficients was noted for each
value of relative error. Ignoring the two lowest relative errors,
the comparison of the plots from these two methods clearly demonstrates
how the adaptive-sparse PDD method requires fewer expansion coefficients
than the truncated PDD method to achieve the desired level of relative
error. While the adaptive-sparse PDD method intelligently calculates
only those coefficients that are making significant contribution to
the variance, the truncated PDD method ends up calculating more coefficients
than required. Therefore, the adaptive-sparse PDD approximation represents
a more scientific and efficient method than the truncated PDD methods.

\begin{figure}[tbph]
\begin{centering}
\includegraphics[bb=10bp 10bp 650bp 510bp,clip,scale=0.49]{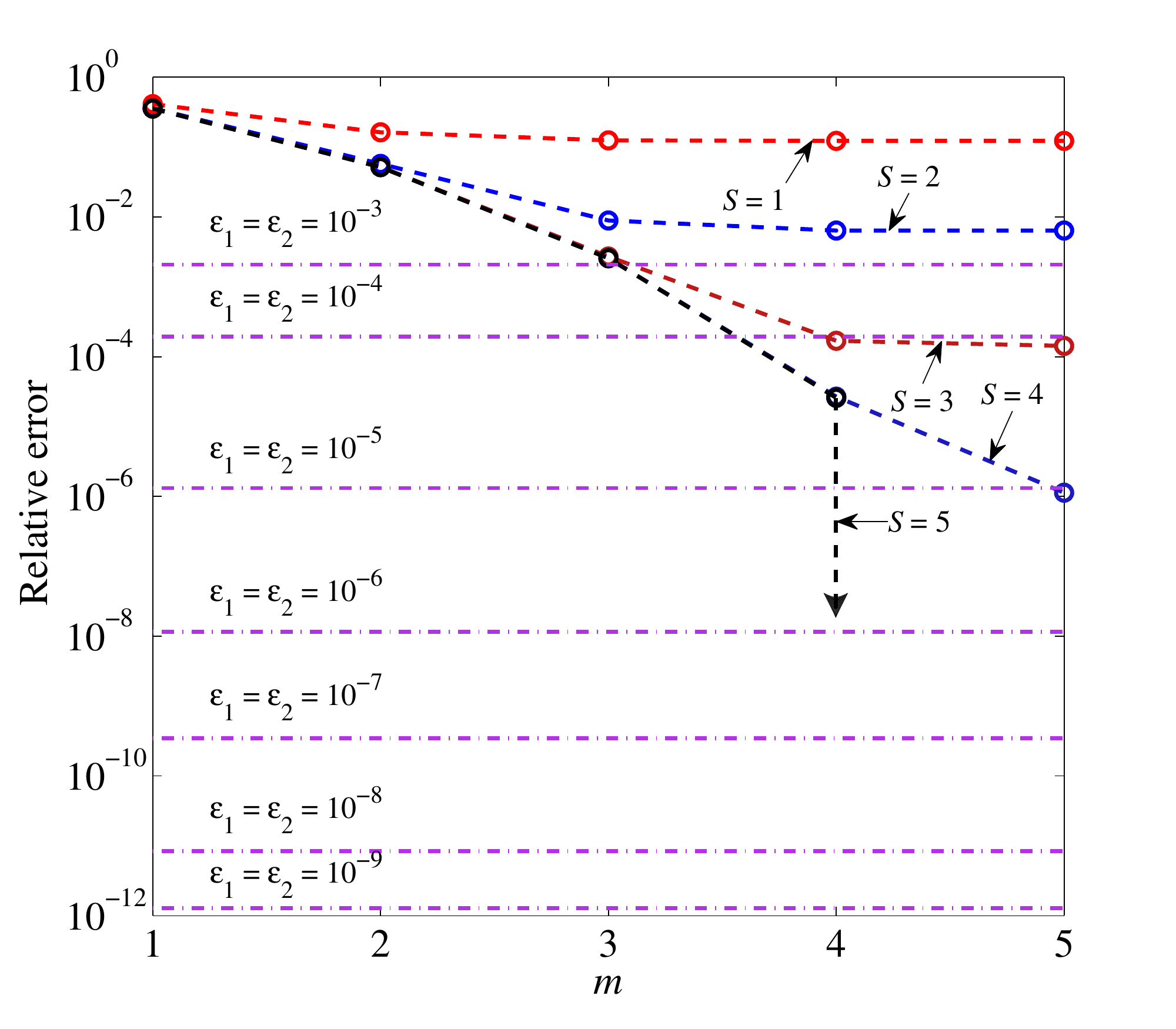}
\par\end{centering}
\caption{\label{ex1-fig1}Relative error in calculating the variance of a mathematical
function by fully adaptive-sparse and truncated PDD methods (Example
1).}
\end{figure}

\begin{figure}[tbph]
\begin{centering}
\includegraphics[bb=16bp 15bp 700bp 480bp,clip,scale=0.49]{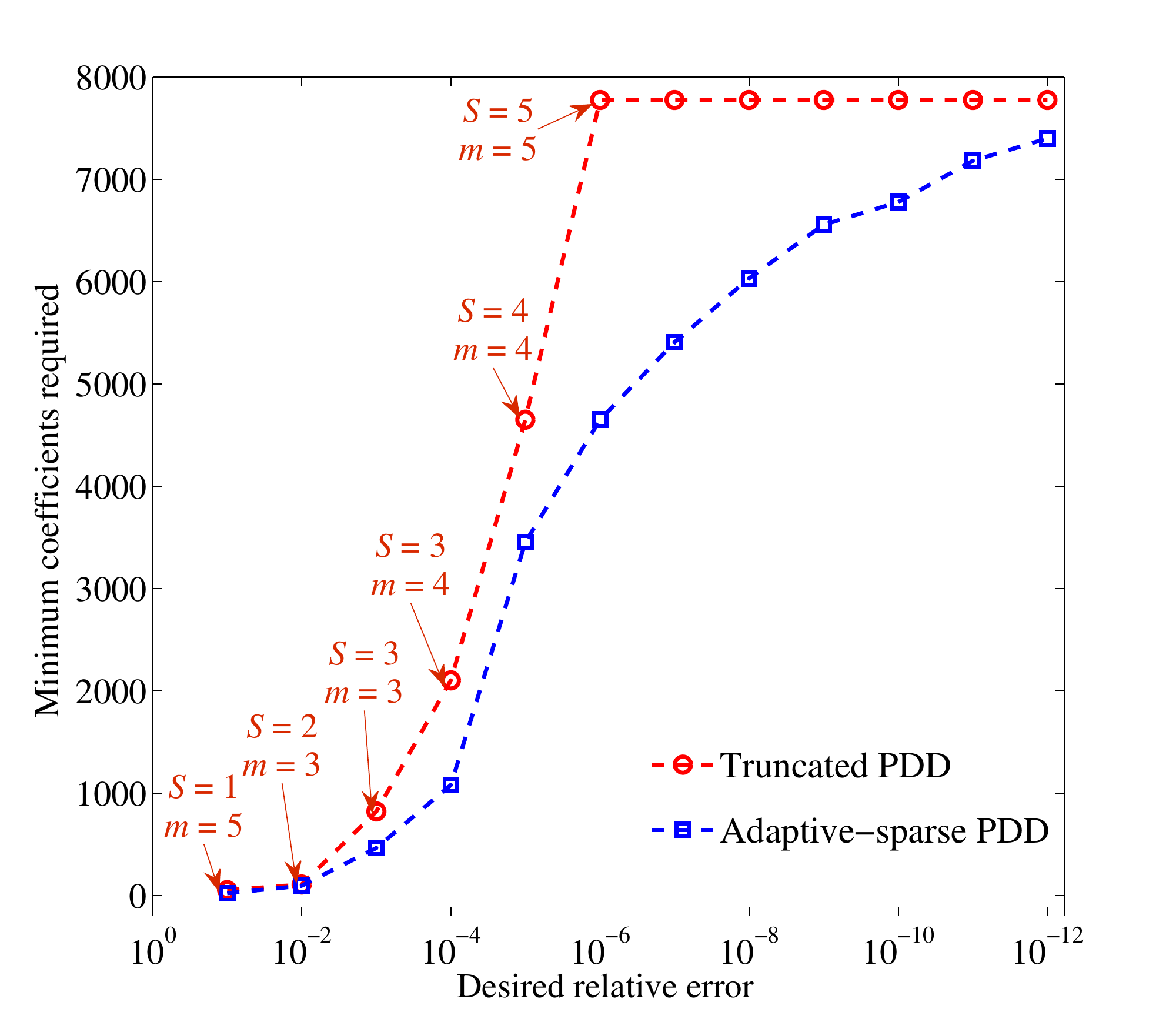}
\par\end{centering}
\caption{\label{ex1-fig2}Minimum number of coefficients required to achieve
a desired relative error in the variance of a mathematical function
by fully adaptive-sparse and truncated PDD methods (Example 1).}
\end{figure}

\subsection{Example 2: Eigenvalues of an Undamped, Spring-Mass System}

Consider a three-degree-of-freedom, undamped, spring-mass system,
shown in Figure \ref{ex2Fig1}, with random mass and random stiffness
matrices
\begin{equation}
\mathbf{M}\left(\mathbf{X}\right)=\left[\begin{array}{ccc}
M_{1}\left(\mathbf{X}\right) & 0 & 0\\
0 & M_{2}\left(\mathbf{X}\right) & 0\\
0 & 0 & M_{3}\left(\mathbf{X}\right)
\end{array}\right]\label{ex2Mass}
\end{equation}
 and
\begin{equation}
\mathbf{K}\left(\mathbf{X}\right)=\left[\begin{array}{ccc}
K_{11}\left(\mathbf{X}\right) & K_{12}\left(\mathbf{X}\right) & K_{13}\left(\mathbf{X}\right)\\
 & K_{22}\left(\mathbf{X}\right) & K_{23}\left(\mathbf{X}\right)\\
(\mathrm{sym.}) &  & K_{33}\left(\mathbf{X}\right)
\end{array}\right],\label{ex2stiffness}
\end{equation}
respectively, where $K_{11}\left(\mathbf{X}\right)=K_{1}\left(\mathbf{X}\right)+K_{4}\left(\mathbf{X}\right)+K_{6}\left(\mathbf{X}\right)$,
$K_{12}\left(\mathbf{X}\right)=-K_{4}\left(\mathbf{X}\right)$, $K_{13}\left(\mathbf{X}\right)=-K_{6}\left(\mathbf{X}\right)$,
$K_{22}\left(\mathbf{X}\right)=K_{4}\left(\mathbf{X}\right)+K_{5}\left(\mathbf{X}\right)+K_{2}\left(\mathbf{X}\right)$,
$K_{23}\left(\mathbf{X}\right)=-K_{5}\left(\mathbf{X}\right)$, and
$K_{33}\left(\mathbf{X}\right)=K_{5}\left(\mathbf{X}\right)+K_{3}\left(\mathbf{X}\right)+K_{6}\left(\mathbf{X}\right)$;
the masses $M_{i}\left(\mathbf{X}\right)=\mu_{i}X_{i}$; $i=1,2,3$
with $\mu_{i}=1.0$ kg; $i=1,2,3$, and spring stiffnesses $K_{i}\left(\mathbf{X}\right)=\mu_{i+3}X_{i+3}$;
$i=1,\cdots,6$ with $\mu_{i+3}=1.0$ N/m; $i=1,\cdots,5$ and $\mu_{9}=3.0$
N/m. The input $\mathbf{X}=\left\{ X_{1},\cdots,X_{9}\right\} ^{T}\in\mathbb{R}^{9}$
is an independent lognormal random vector with mean $\boldsymbol{\mu}_{\mathbf{X}}=\boldsymbol{1}\in\mathbb{R}^{9}$
and covariance matrix $\boldsymbol{\Sigma}_{\mathbf{X}}=\nu^{2}\mathbf{I}\in\mathbb{R}^{9\times9}$
with coefficient of variation $\nu=0.3$.

Three partially adaptive-sparse PDD methods with $S=1$, 2, and 3
were applied to calculate the variances (Equation \eqref{sr14}) of
the three random eigenvalues of the dynamic system. The tolerances
values are as follows: $\epsilon_{1}=\epsilon_{2}=10^{-6}$ and $\epsilon_{3}=0.7$.
Table \ref{ex2table1} presents the variances of eigenvalues from
various partially adaptive-sparse PDD methods calculated according
to Algorithms 1 and 2. The results of both full and reduced ranking
systems are tabulated. Also included in Table \ref{ex2table1} are
the variance calculations from crude MCS. The variances obtained using
the univariate ($S=1$) partially adaptive-sparse PDD approximation
are relatively far from the benchmark results of crude MCS since the
univariate approximation is unable to capture any interactive effects
of the input variables. However, the bivariate ($S=2$) and trivariate
($S=3$) partially adaptive-sparse PDD approximations achieve very
high accuracy in calculating the variances of all three random eigenvalues.
Remarkably, the reduced ranking scheme delivers the same level of
accuracy, at least up to three decimal places shown, of the full ranking
scheme in calculating the variances.

In order to study the efficiency of the reduced ranking scheme vis-a-vis
the full ranking scheme in a trivariate partially adaptive-sparse
PDD approximation, the corresponding total numbers of coefficients
(Equation \eqref{sr17}) required were compared, along with the total
number of coefficients (Equation \eqref{eq:K-tilda-Sm}) required in
a trivariate, fifth-order truncated PDD approximation, in Figure \ref{ex2Fig2}.
The order of the truncated PDD is the largest value of $m_u$ required in the adaptive-sparse PDD approximation. While the partially adaptive-sparse PDD method with either ranking
scheme requires fewer coefficients than does the truncated PDD method,
it is the reduced ranking scheme that is the clear winner in efficiency
with the least number of coefficients. The largest reduction in the
number of coefficients achieved by the reduced ranking system is approximately
sixty-eight percent when calculating the variance of the third eigenvalue. These results are in agreement with Proposition \ref{pro6}.

\begin{figure}[tbph]
\begin{centering}
\centering{}\includegraphics[trim=0.5cm 3.5cm 13cm 1cm, clip=true,scale=0.45, angle=-90]{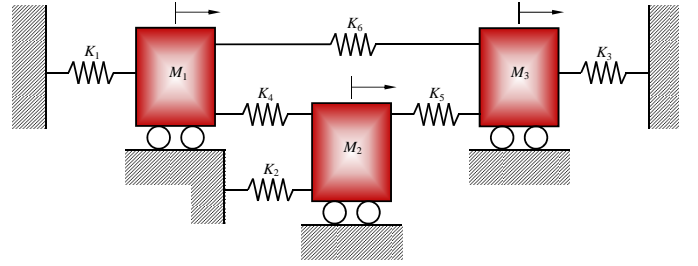}
\par\end{centering}
\caption{\label{ex2Fig1}A three-degree-of-freedom undamped, spring-mass system
(Example 2).}
\end{figure}

\begin{table*}[tbph]
\centering
\begin{threeparttable}
\caption{\label{ex2table1}Variances of three eigenvalues of a three-degree-of-freedom
linear oscillator by three partially adaptive-sparse PDD methods and
crude MCS.}
\begin{centering}
\begin{tabular}{cccccccccccc}
\hline
 &  & \multicolumn{2}{c}{$S=1$} &  & \multicolumn{2}{c}{$S=2$} &  & \multicolumn{2}{c}{$S=3$} &  & MCS\tabularnewline
\hline
$\lambda$  &  & Full  & Reduced  &  & Full  & Reduced  &  & Full  & Reduced  &  & $10^{6}$\tabularnewline
 &  & ranking  & ranking  &  & ranking  & ranking  &  & ranking  & ranking  &  & \tabularnewline
\hline
1  &  & $0.057$  & $0.057$  &  & $0.060$  & $0.060$  &  & $0.060$  & $0.060$  &  & $0.060$\tabularnewline
2  &  & $1.152$  & $1.152$  &  & $1.204$  & $1.204$  &  & $1.215$  & $1.215$  &  & $1.219$\tabularnewline
3  &  & $7.289$  & $7.289$  &  & $7.576$  & $7.576$  &  & $7.585$  & $7.585$  &  & $7.585$\tabularnewline
\hline
\end{tabular}
\par\end{centering}
\end{threeparttable}
\end{table*}

\begin{figure}[tbph]
\begin{centering}
\includegraphics[trim=2cm 8.5cm 1cm 0cm, clip=true,scale=0.53]{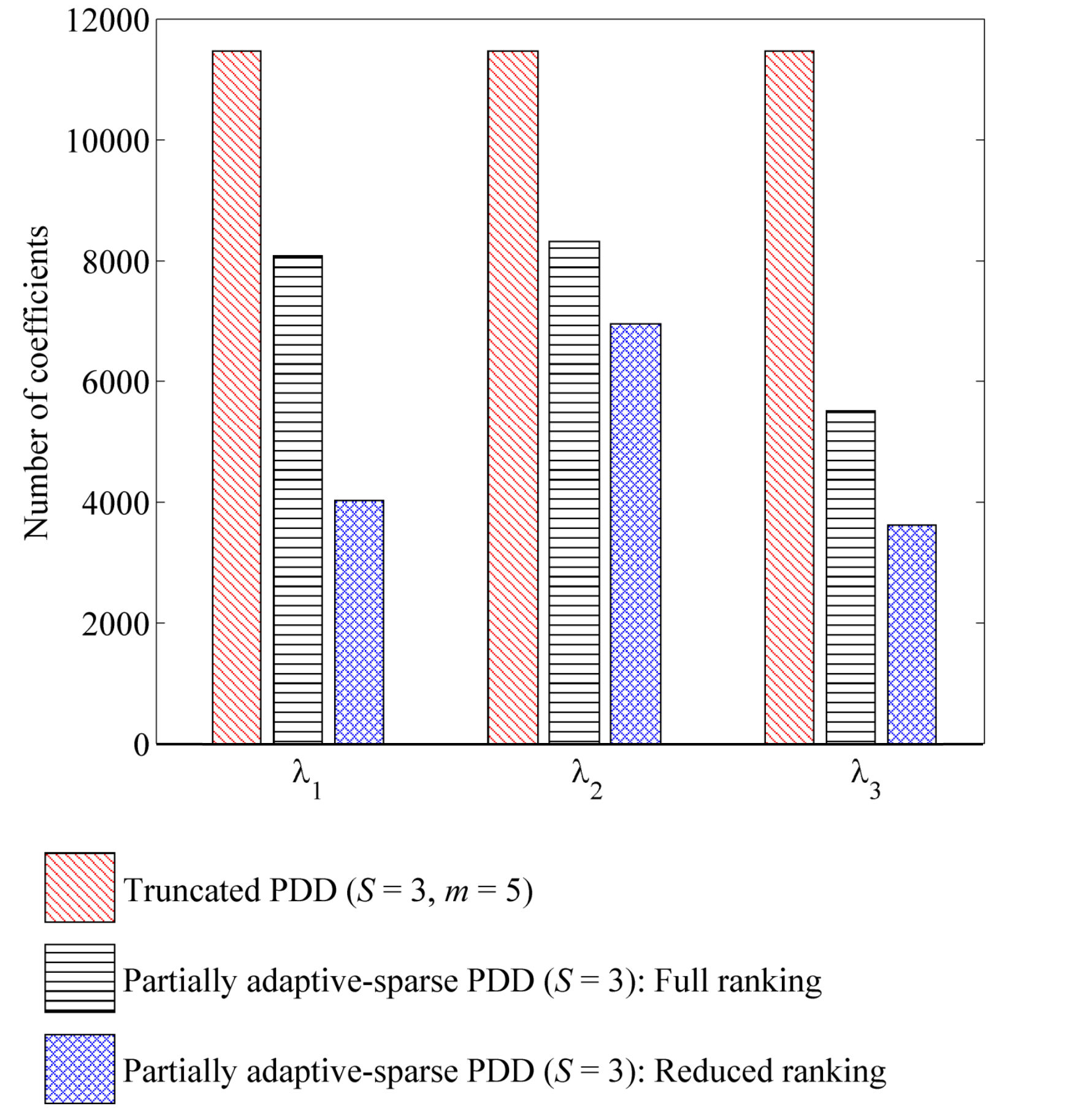}
\par\end{centering}
\caption{\label{ex2Fig2}Number of coefficients required for calculating the
variance of a three-degree-of-freedom linear oscillator by trivariate
partially adaptive-sparse PDD approximations using full and reduced
ranking schemes.}
\end{figure}

\subsection{Example 3: Modal Analysis of a Functionally Graded Cantilever Plate}

The third example involves free vibration analysis of a $2\mathrm{\: m}\times1\mathrm{\: m\times10}\:\mathrm{mm}$
cantilever plate, shown in Figure \ref{ex3fig1}(a), made of a functionally
graded material (FGM)%
\footnote{Functionally graded materials are two- or multi-phase particulate
composites in which material composition and microstructure vary spatially
in the macroscopic length scale to meet a desired functional performance. %
}, where silicon carbide (SiC) particles varying along the horizontal
coordinate $\xi$ are randomly dispersed in an aluminum (Al) matrix
\cite{yadav13}. The result is a random inhomogeneous plate, where the effective
elastic modulus $E(\xi)$, effective Poisson's ratio $\nu(\xi)$,
and effective mass density $\rho(\xi)$ are random fields. They depend
on two principal sources of uncertainties: (1) randomness in the volume
fraction of SiC particles $\phi_{\mathrm{SiC}}(\xi)$, which varies
only along $\xi$, and (2) randomness in constituent material properties,
comprising elastic moduli $E_{\mathrm{SiC}}$ and $E_{\mathrm{Al}}$,
Poisson's ratios $\nu_{\mathrm{SiC}}$ and $\nu_{\mathrm{Al}}$, and
mass densities $\rho_{\mathrm{SiC}}$ and $\rho_{\mathrm{Al}}$ of
SiC and Al material phases, respectively. The particle volume fraction
$\phi_{\mathrm{SiC}}(\xi)$ is a one-dimensional, inhomogeneous, Beta
random field with mean $\mu_{\mathrm{SiC}}(\xi)=1-\xi/L$, standard
deviation $\sigma_{\mathrm{SiC}}(\xi)=(\xi/L)(1-\xi/L)$, where $L$
is the length of the plate. Assuming an appropriately bounded covariance
function of $\phi_{\mathrm{SiC}}(\xi)$, the standardized volume fraction,
$\tilde{\phi}_{\mathrm{SiC}}(\xi):=[\phi_{\mathrm{SiC}}(\xi)-\mu_{\mathrm{SiC}}(\xi)]/\sigma_{\mathrm{SiC}}(\xi)$,
was mapped to a \emph{zero}-mean, homogeneous, Gaussian image field $\alpha(\xi)$
with an exponential covariance function $\Gamma_{\alpha}(t):=\mathbb{E}[\alpha(\xi)\alpha(\xi+t)]=\exp(-\left|t\right|/0.125L)$
via $\tilde{\phi}_{\mathrm{SiC}}(\xi)=F_{\mathrm{SiC}}^{-1}\left[\Phi(\alpha(\xi))\right]$,
where $\Phi$ is the distribution function of a standard Gaussian
random variable and $F_{\mathrm{SiC}}$ is the marginal distribution
function of $\tilde{\phi}_{\mathrm{SiC}}(\xi)$. The Karhunen-Lo\`{e}ve
approximation \cite{davenport58} was employed to discretize $\alpha(\xi)$
and hence $\phi_{\mathrm{SiC}}(\xi)$ into 28 standard Gaussian random
variables. In addition, the constituent material properties, $E_{\mathrm{SiC}}$,
$E_{\mathrm{Al}}$, $\nu_{\mathrm{SiC}}$, $\nu_{\mathrm{Al}}$, $\rho_{\mathrm{SiC}}$, and
$\rho_{\mathrm{Al}}$, were modeled as independent lognormal random
variables with their means and coefficients of variation described
in Table \ref{tab:3}. Therefore, a total of 34 random variables are
involved in this example. Employing a rule of mixture, $E(\xi)\cong E_{\mathrm{SiC}}\phi_{\mathrm{SiC}}(\xi)+E_{\mathrm{Al}}[1-\phi_{\mathrm{SiC}}(\xi)]$,
$\nu(\xi)\cong\nu_{\mathrm{SiC}}\phi_{\mathrm{SiC}}(\xi)+\nu_{\mathrm{Al}}[1-\phi_{\mathrm{SiC}}(\xi)]$,
and $\rho(\xi)\cong\rho_{\mathrm{SiC}}\phi_{\mathrm{SiC}}(\xi)+\rho_{\mathrm{Al}}[1-\phi_{\mathrm{SiC}}(\xi)]$.
Using these spatially-variant effective properties, a $20\times40$
mesh consisting of 800 eight-noded, second-order shell elements, shown
in Figure \ref{ex3fig1}(b), was constructed for FEA, to determine the natural frequencies of the FGM plate. No damping
was included. A Lanczos algorithm \cite{cullum02} was employed for
calculating the eigenvalues.

\begin{figure}[tbph]
\begin{centering}
\includegraphics[scale=0.8]{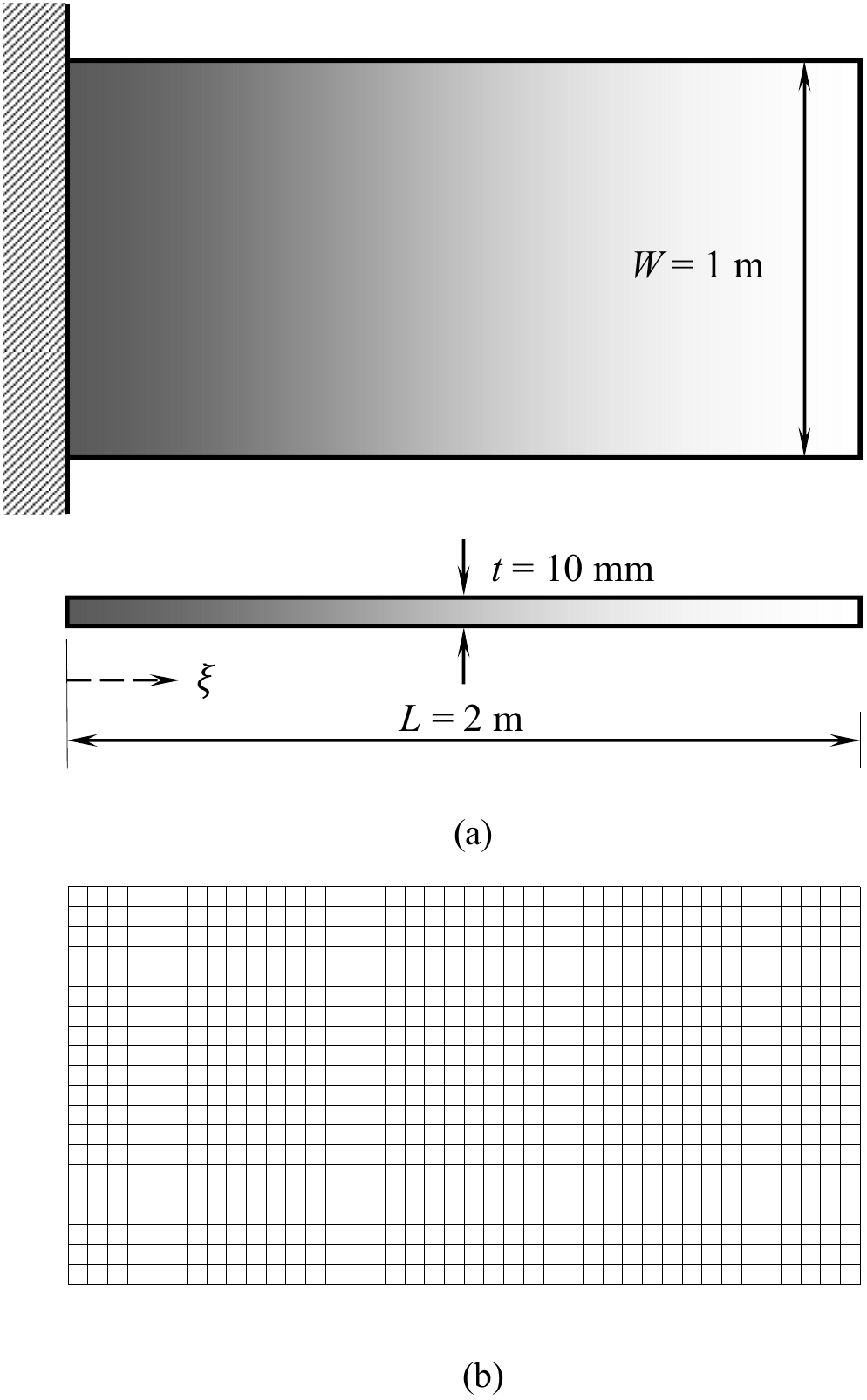}
\par\end{centering}
\caption{\label{ex3fig1}An FGM cantilever plate: (a) geometry; (b) a 20$\times$40
FEA mesh.}
\end{figure}

\begin{table}[tbph]
\centering
\begin{threeparttable}
 \caption{\label{tab:3}Statistical material properties of constituents in
SiC-Al FGM.}

\begin{centering}
\begin{tabular}{lcc}
\hline
Material properties{\footnotesize $^{(\mathrm{1})}$}\hspace{10 mm}
& Mean  \hspace{15 mm}&\hspace{5 mm} COV{\footnotesize $^{(\mathrm{2})}$}, \% \hspace{5 mm}\tabularnewline
\hline
$E_{\mathrm{SiC}}$, GPa  & 419.2  & 15\tabularnewline
$\nu_{\mathrm{SiC}}$  & 0.19  & 5\tabularnewline
$\rho_{\mathrm{SiC}}$, kg/$\mathrm{m}^{3}$  & 3210  & 15\tabularnewline
$E_{\mathrm{Al}}$, GPa  & 69.7  & 15\tabularnewline
$\nu_{\mathrm{Al}}$  & 0.34  & 5\tabularnewline
$\rho_{\mathrm{Al}}$, kg/$\mathrm{m}^{3}$  & 2520  & 15\tabularnewline
\hline
\end{tabular}
\begin{tablenotes}
\footnotesize
\item [(1)] $E_{\mathrm{SiC}}$ = elastic modulus of SiC, $\nu_{\mathrm{SiC}}$ = Poisson's ratio of SiC,
\item \hspace{1 mm}$\rho_{\mathrm{SiC}}$ = mass density of SiC, $E_{\mathrm{Al}}$ = elastic modulus of Al,
\item \hspace{1 mm}$\nu_{\mathrm{Al}}$ = Poisson's ratio of Al, $\rho_{\mathrm{Al}}$ = mass density of Al.
\item [(2)] Coefficient of variation.
\end{tablenotes}
\par\end{centering}
\end{threeparttable}
\end{table}

The probability distributions of the first six natural frequencies
of the functionally graded material plate were evaluated using four
different PDD methods: (1) the bivariate partially adaptive-sparse
PDD method with full-grid dimension-reduction integration; (2) the
bivariate partially adaptive-sparse PDD method with sparse-grid dimension-reduction
integration with extended FSI rule; (3) the univariate, fifth-order
PDD method; and (4) the bivariate, fifth-order PDD method; and the crude
MCS. Again, the order of the truncated PDD was selected based on the largest value of $m_u$ required in the adaptive-sparse PDD methods. The tolerances used
for adaptive and ranking algorithms are $\epsilon_{1}=\epsilon_{2}=10^{-6}$
and $\epsilon_{3}=0.9$. Figure \ref{ex3fig2} presents the marginal
probability distributions $F_{i}(\omega_{i}):=P[\Omega_{i}\le\omega_{i}]$
of the first six natural frequencies $\Omega_{i}$, $i=1,\cdots,6$,
where all the PDD solutions were obtained from the embedded MCS. The
plots are made over a semi-logarithmic scale to delineate the distributions
in the tail regions. For all six frequencies, the probability distributions
obtained from a bivariate partially adaptive-sparse PDD method, whether
using either full-grid or sparse-grid, and the bivariate fifth-order
PDD method are much closer to the crude Monte Carlo results compared
with those obtained from the univariate, fifth-order PDD method. While all PDD approximations
require fewer function evaluations than the crude MCS, both variants of the partially
adaptive-sparse PDD approximations remit exceptionally high efficiency
by an average factor of six when compared with the bivariate, fifth-order
PDD approximation. However, the advantage of the sparse-grid integration
over the full-grid integration employed in the adaptive-sparse approximation
is modest in terms of computational efficiency. This is explained as follows.

\begin{figure*}[tbph]
\includegraphics[scale=0.8]{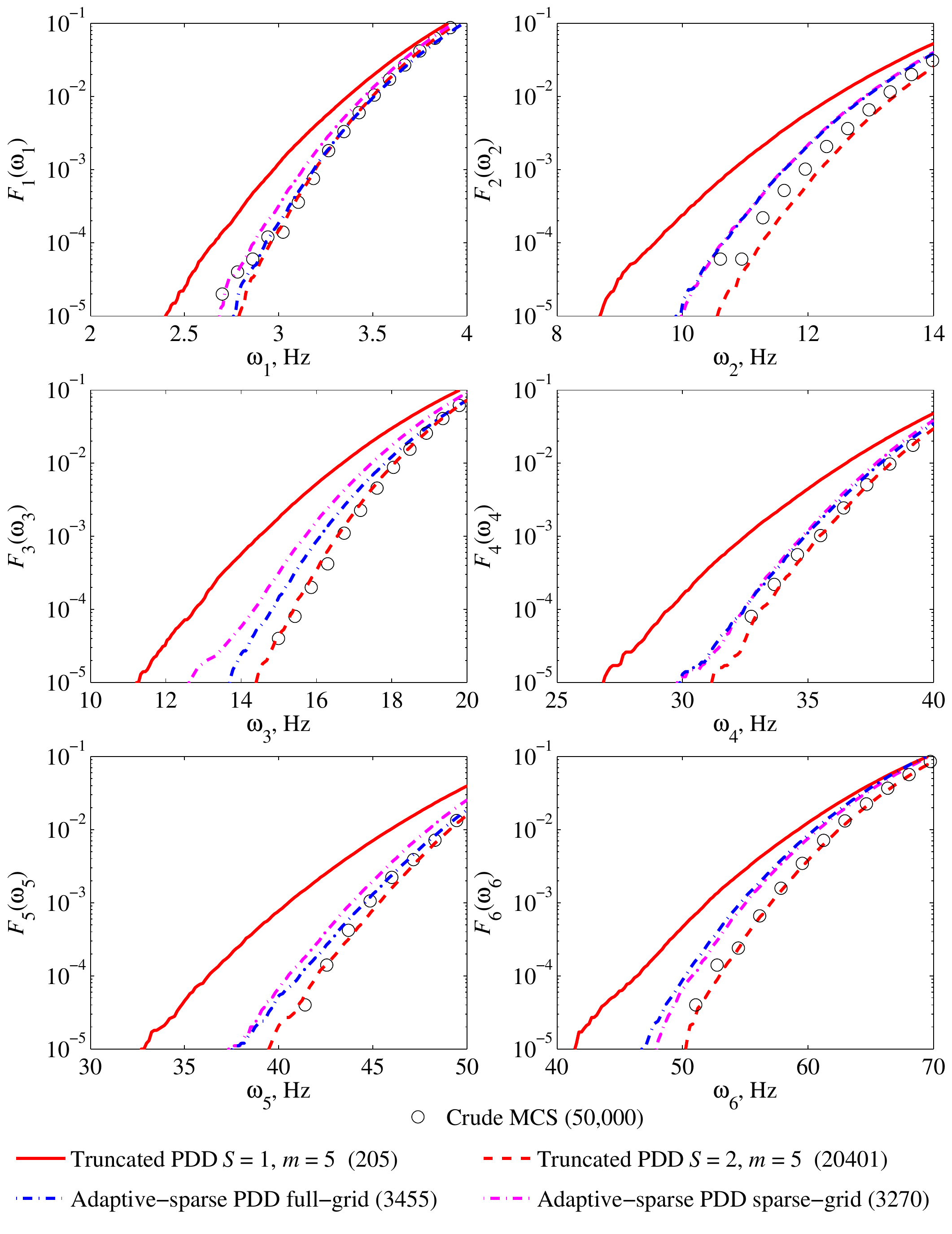}
\caption{\label{ex3fig2}Marginal probability distributions of the first six
natural frequencies of an FGM plate by various PDD approximations
and crude MCS.}
\end{figure*}

The efficient reduced ranking algorithm was employed
in this example. When the bivariate component functions were ranked
for $m_{u}=1$, the coefficient calculation for both full-grid and
sparse-grid involved function evaluation at the point $\left(0,0\right)$
as shown for $l=1$ in Figure \ref{SparseFig1}. The function evaluations at this
point return only the functions already evaluated at the point $\left(\bold{c}\right)$,
i.e., response at mean $y\left(\bold{c}\right)$, thus the bivariate
component functions could not be ranked for $m_{u}=1$. When the polynomial
order was incremented to $m_{u}=2$, the full-grid for $l=2$
comprises of four non-\emph{zero} integration points, resulting in non-trivial
bivariate function evaluations at those points. However, the sparse-grid
consists of four new points lying on the axes, failing to capture
the interaction effect of two variables. This results in bivariate
function evaluations that are not useful in creating a ranking. Thus,
for $m_{u}=2$, full-grid involves ranking all the $28\times27/2=378$
bivariate component functions, with $378\times4=1512$ new function
evaluations, while the sparse-grid was still lacking any ranking.
Moving to $m_{u}=3$, full-grid can afford to exploit the efficient
reduced-ranking by truncating the ranking from $m_{u}=2$ and calculating
coefficients only for fewer than $378$ component functions. However, the sparse-grid
is forced to evaluate all $378$ component functions for $m_{u}=3$,
resulting in $378\times4=1512$ function evaluations at four new integration
points, depriving this efficient technique of any initial advantage.
The modest advantage in computational efficiency that the sparse-grid
eventually achieves was obtained only after ranking at $m_{u}=4$
and onwards.

\begin{figure*}[tbph]
\includegraphics[scale=0.85]{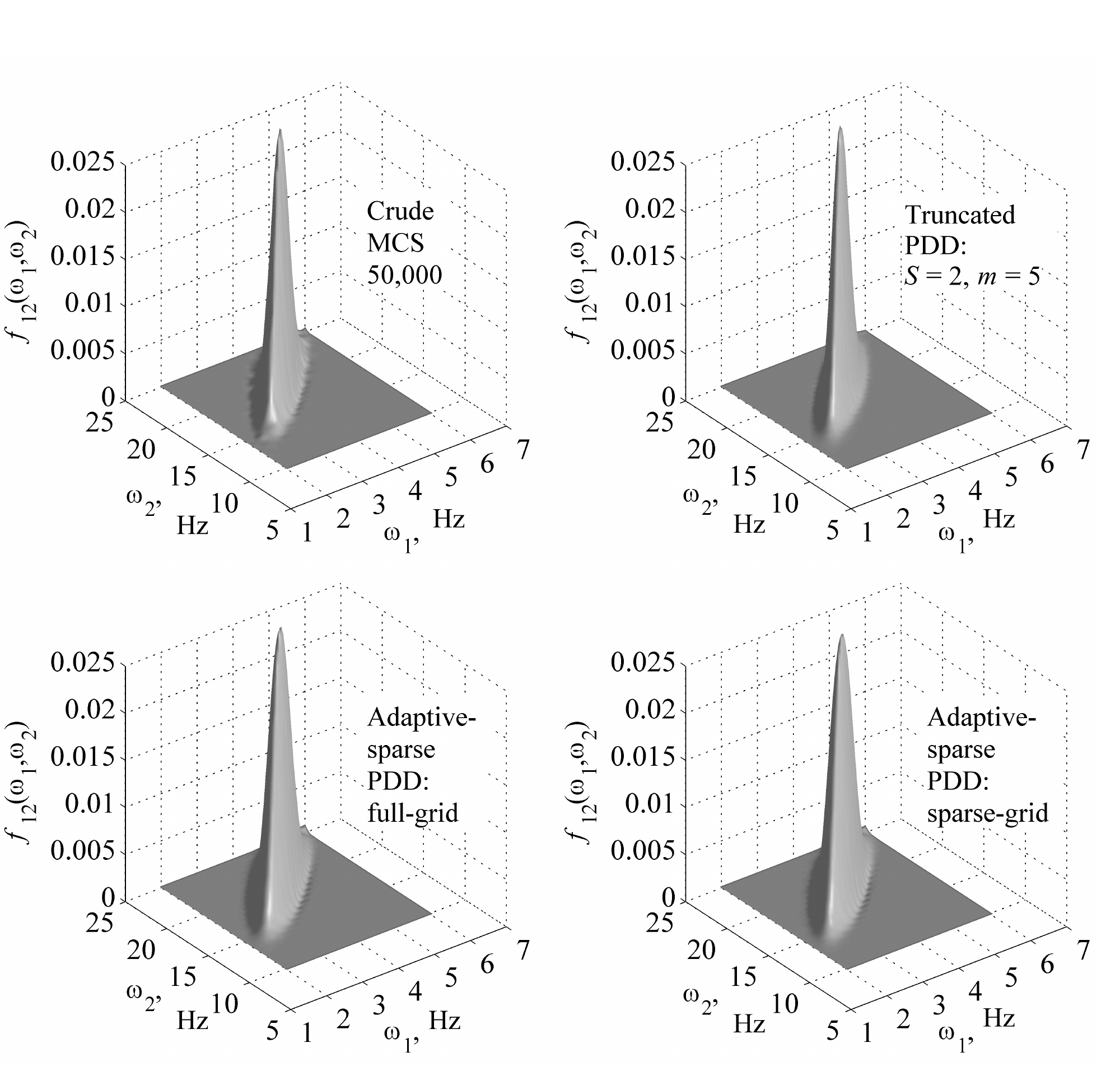}
\caption{\label{ex3fig3}Joint probability density function of the first and
second natural frequencies of the FGM plate by various PDD approximations
and crude MCS.}
\end{figure*}

Figure \ref{ex3fig3} displays the joint probability density function
$f_{12}(\omega_{1},\omega_{2})$ of the first two natural frequencies
$\Omega_{1}$ and $\Omega_{2}$ obtained by the two variants of the
bivariate partially adaptive-sparse PDD method, the bivariate, fifth-order
PDD method, and crude MCS. Although visually comparing these three-dimensional
plots is not simple, the joint distributions from all PDD approximations
and the crude Monte Carlo method seem to match reasonably well. The
contours of these three-dimensional plots were studied at two notably
different levels: $f_{12}=0.005$ (high level) and $f_{12}=0.0005$
(low level), as depicted in Figures \ref{ex2fig4}(a) and \ref{ex2fig4}(b),
respectively. For both levels examined, a good agreement exists among
the contours from all four distributions. These results are consistent
with the marginal distributions of natural frequencies discussed in
the preceding paragraph.

\begin{figure}[tbph]
\centering{}\includegraphics[scale=0.57]{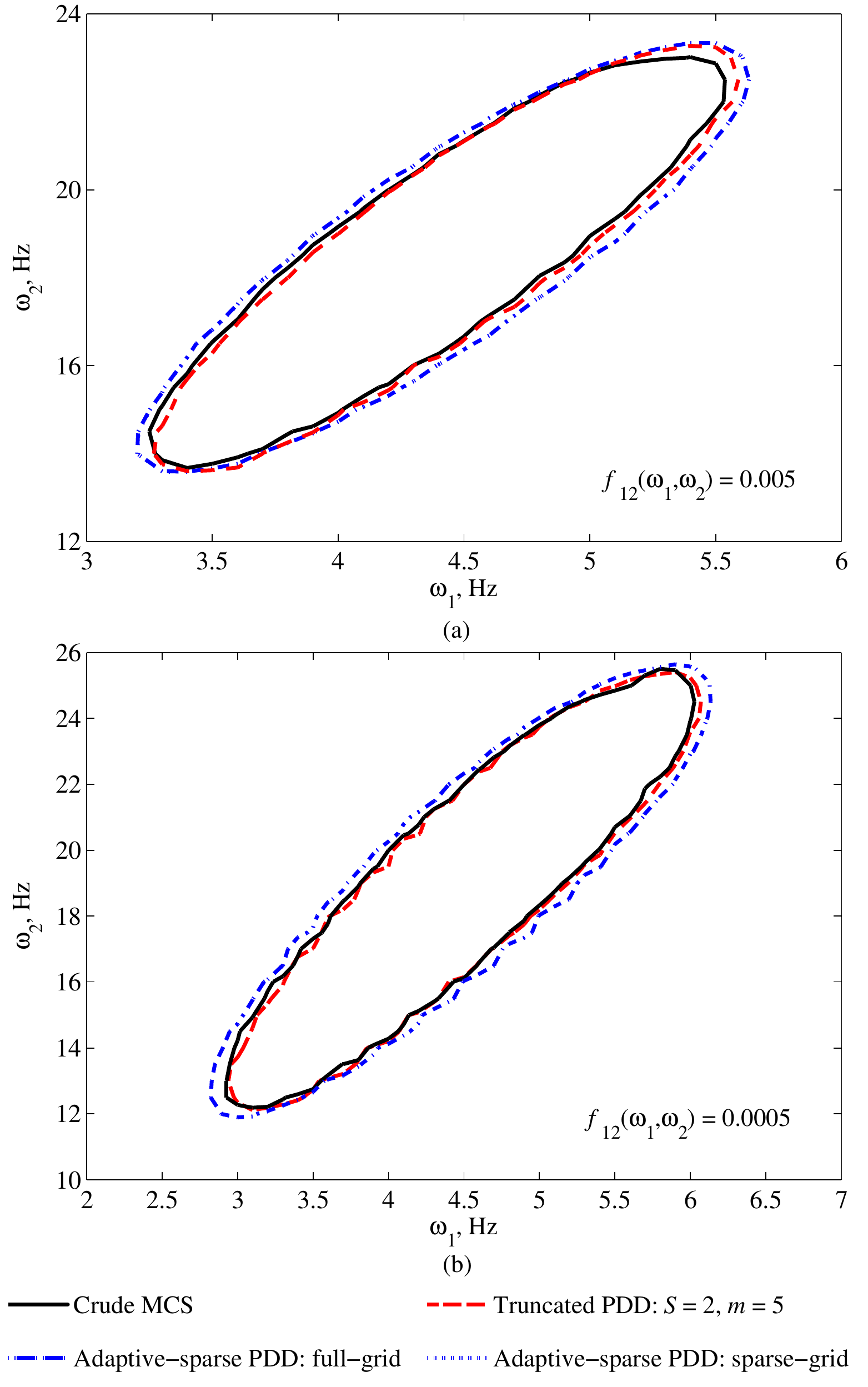}
\caption{\label{ex2fig4}Contours of the joint density function of the first
and second natural frequencies of the FGM plate by various PDD approximations
and crude MCS: (a) $f_{12}=0.005$; (b) $f_{12}=0.0005$.}
\end{figure}

\section{Application: A Disk Brake System}

This section demonstrates the capabilities of the proposed partially
adaptive-sparse PDD method in solving a large-scale practical engineering
problem. The application comprises of determining instabilities in
a disk brake system in terms of statistical analysis of complex frequencies
and corresponding mode shapes. The dynamic instabilities in a braking
system, emanating from complex frequencies, give rise to the highly
undesired phenomenon of brake squeal. When a braking system is subjected
to random input parameters, it is imperative to perform a random
brake-squeal analysis in order to identify, quantify, and minimize
the random dynamic instabilities.

\subsection{Brake-Squeal Analysis}

A disk brake system, illustrated in Figure \ref{brakeFEA}(a), slows
motion of the wheel by pushing brake pads against a rotor with a set
of calipers \cite{wikibrake}. The brake pads mounted on a brake caliper is forced mechanically,
hydraulically, pneumatically, or electromagnetically against both
sides of the rotor. Friction causes the rotor and attached wheel to
slow or stop. Figure \ref{brakeFEA}(b) presents a simplified FEA
model of a disk brake system commonly used in domestic passenger vehicles.
The system consists of a rotor of diameter 288 mm and thickness 20
mm. Two pads are positioned on both sides of the rotor. Assembled
behind the pads are back plates and insulators. The FEA mesh of the
model consists of 26,125 elements and 111,129 active degrees of freedom
and was generated using C3D6 and C3D8I elements in Abaqus computer
software (Version 6.12) \cite{abaqus11}. The rotor is made of cast
iron and the back plates and insulators are made of steel. The two
brake pads are made of organic frictional material, which is modeled
as an orthotropic elastic material. The mass densities and Young's
moduli of the rotor, back-plates, insulators and pads along with the
shear moduli of the pads are modeled as random variables with uniform
distribution. Along with the random material properties, the brake
pressure, the radial velocity of the rotor, and the coefficient of
friction between the rotor and pads are modeled as uniform random
variables, constituting a total of $16$ random variables in this
problem. The statistical properties of all random variables are listed
in Table \ref{brakeRVs}. Apart from the random material properties,
the deterministic Poisson's ratio of rotor, back-plates and insulators
are 0.24, 0.28, and 0.29, respectively. The three Poisson's ratios
of orthotropic material of pads are $\nu_{12}=0.06$, $\nu_{23}=0.41$,
and $\nu_{31}=0.15$.

\begin{figure}[tbph]
\centering{}\includegraphics[scale=0.58]{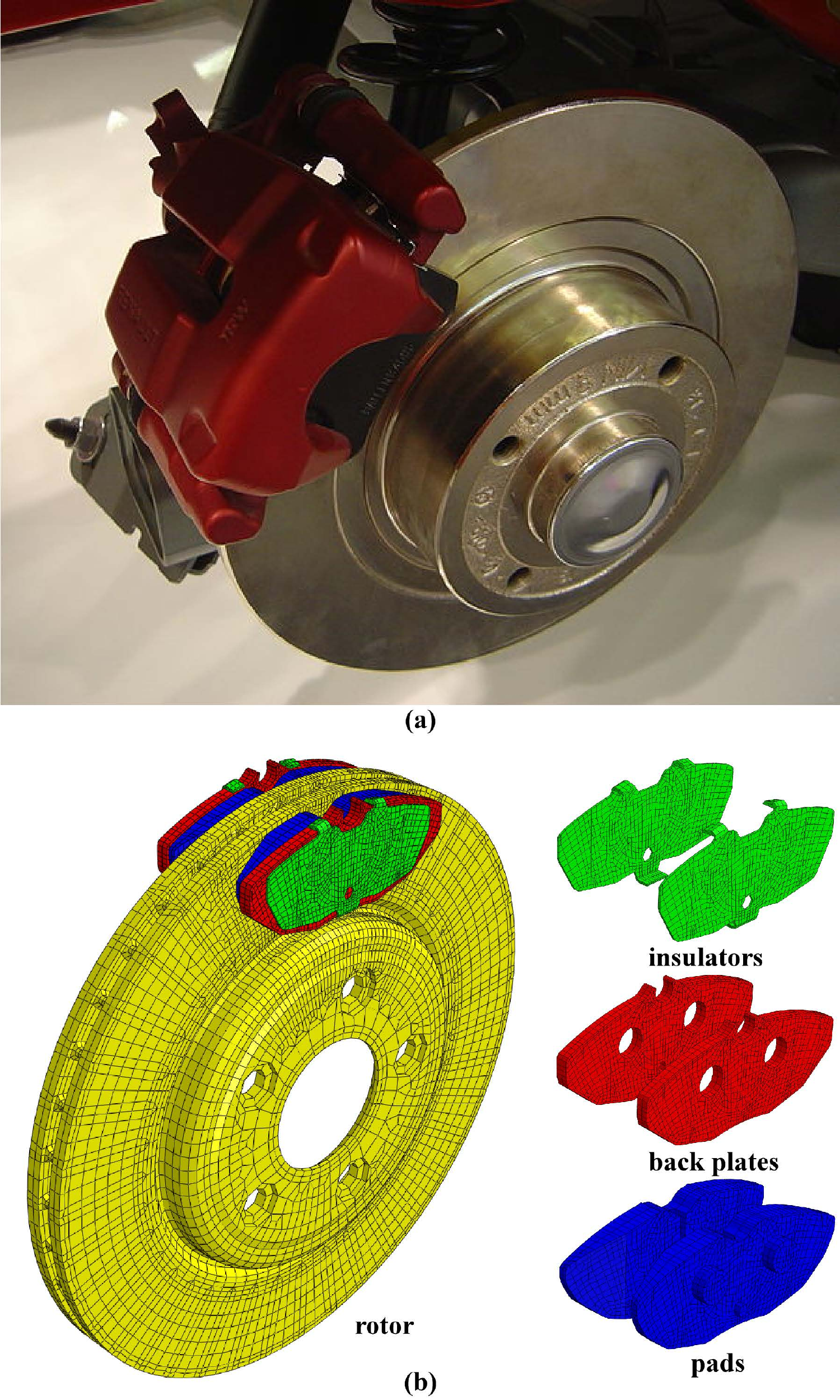}
 \caption{\label{brakeFEA}A disk brake system with various mechanical components:
(a) close-up on a passenger vehicle; (b) a simplified FEA model }
\end{figure}

\begin{table}[tbph]
\caption{\label{brakeRVs}Random input variables in disk-brake system with
the minimum $\left(a_{i}\right)$ and maximum $\left(b_{i}\right)$
values of their uniform distributions.}

\begin{tabular}{lcc}
\hline
Random variables{\footnotesize $^{(\mathrm{1})}$}  & \hspace{0.25in}$a_{i}$  & \hspace{0.25in}$b_{i}$\tabularnewline
\hline
$\rho_{\mathrm{rotor}}$, $\mathrm{kg/mm^{3}}$  & \hspace{0.25in}$5.329\times10^{-6}$  & \hspace{0.25in}$9.071\times10^{-6}$\tabularnewline
$\rho_{\mathrm{back\, plate}}$, $\mathrm{kg/mm^{3}}$  & \hspace{0.25in}$5.788\times10^{-6}$  & \hspace{0.25in}$9.851\times10^{-6}$\tabularnewline
$\rho_{\mathrm{insulator}}$, $\mathrm{kg/mm^{3}}$  & \hspace{0.25in}$5.788\times10^{-6}$  & \hspace{0.25in}$9.851\times10^{-6}$\tabularnewline
$\rho_{\mathrm{pad}}$, $\mathrm{kg/mm^{3}}$  & \hspace{0.25in}$1.858\times10^{-6}$  & \hspace{0.25in}$3.162\times10^{-6}$\tabularnewline
$E_{\mathrm{rotor}}$, GPa  & \hspace{0.25in}92.52  & \hspace{0.25in}157.5\tabularnewline
$E_{\mathrm{back\, plate}}$, GPa  & \hspace{0.25in}153.2  & \hspace{0.25in}260.8\tabularnewline
$E_{\mathrm{insulator}}$, GPa  & \hspace{0.25in}153.2  & \hspace{0.25in}260.8\tabularnewline
$E_{1,\mathrm{pad}}$, GPa  & \hspace{0.25in}4.068  & \hspace{0.25in}6.924\tabularnewline
$E_{2,\mathrm{pad}}$, GPa  & \hspace{0.25in}4.068  & \hspace{0.25in}6.924\tabularnewline
$E_{3,\mathrm{pad}}$, GPa  & \hspace{0.25in}1.468  & \hspace{0.25in}2.498\tabularnewline
$G_{12,\mathrm{pad}}$, GPa  & \hspace{0.25in}1.917  & \hspace{0.25in}3.263\tabularnewline
$G_{13,\mathrm{pad}}$, GPa  & \hspace{0.25in}$0.873$  & \hspace{0.25in}$1.486$\tabularnewline
$G_{23,\mathrm{pad}}$, GPa  & \hspace{0.25in}$0.873$  & \hspace{0.25in}$1.486$\tabularnewline
$P$, $\mathrm{kg/mm^{2}}$  & \hspace{0.25in}370.1  & \hspace{0.25in}629.9\tabularnewline
$\omega$, rad/s  & \hspace{0.25in}3.701  & \hspace{0.25in}6.299\tabularnewline
$\mu$  & \hspace{0.25in}0.50  & \hspace{0.25in}0.70\tabularnewline
\hline
\end{tabular}

{\footnotesize (1) $\rho_{\mathrm{rotor}}$, $\rho_{\mathrm{back\, plate}}$,
$\rho_{\mathrm{insulator}}$, $\rho_{\mathrm{pad}}$: mass densities
of corresponding materials, }{\footnotesize \par}

{\footnotesize \hspace{1mm}$E_{\mathrm{rotor}}$, $E_{\mathrm{back\, plate}}$,
$E_{\mathrm{insulator}}$: elastic modulus of corresponding materials, }{\footnotesize \par}

{\footnotesize \hspace{1mm}$E_{1,\mathrm{pad}}$, $E_{2,\mathrm{pad}}$,
$E_{3,\mathrm{pad}}$: elastic modulus associated with the normal
directions of pad material, }{\footnotesize \par}

{\footnotesize \hspace{1mm}$G_{12,\mathrm{pad}}$, $G_{13,\mathrm{pad}}$,
$G_{23,\mathrm{pad}}$: shear modulus associated with the principal
directions of pad material, }{\footnotesize \par}

{\footnotesize \hspace{1mm}$P$: brake pressure, $\omega$: radial
velocity, $\mu$: friction coefficient.}
\end{table}

\begin{figure*}[tbph]
\centering{}\includegraphics[scale=0.8]{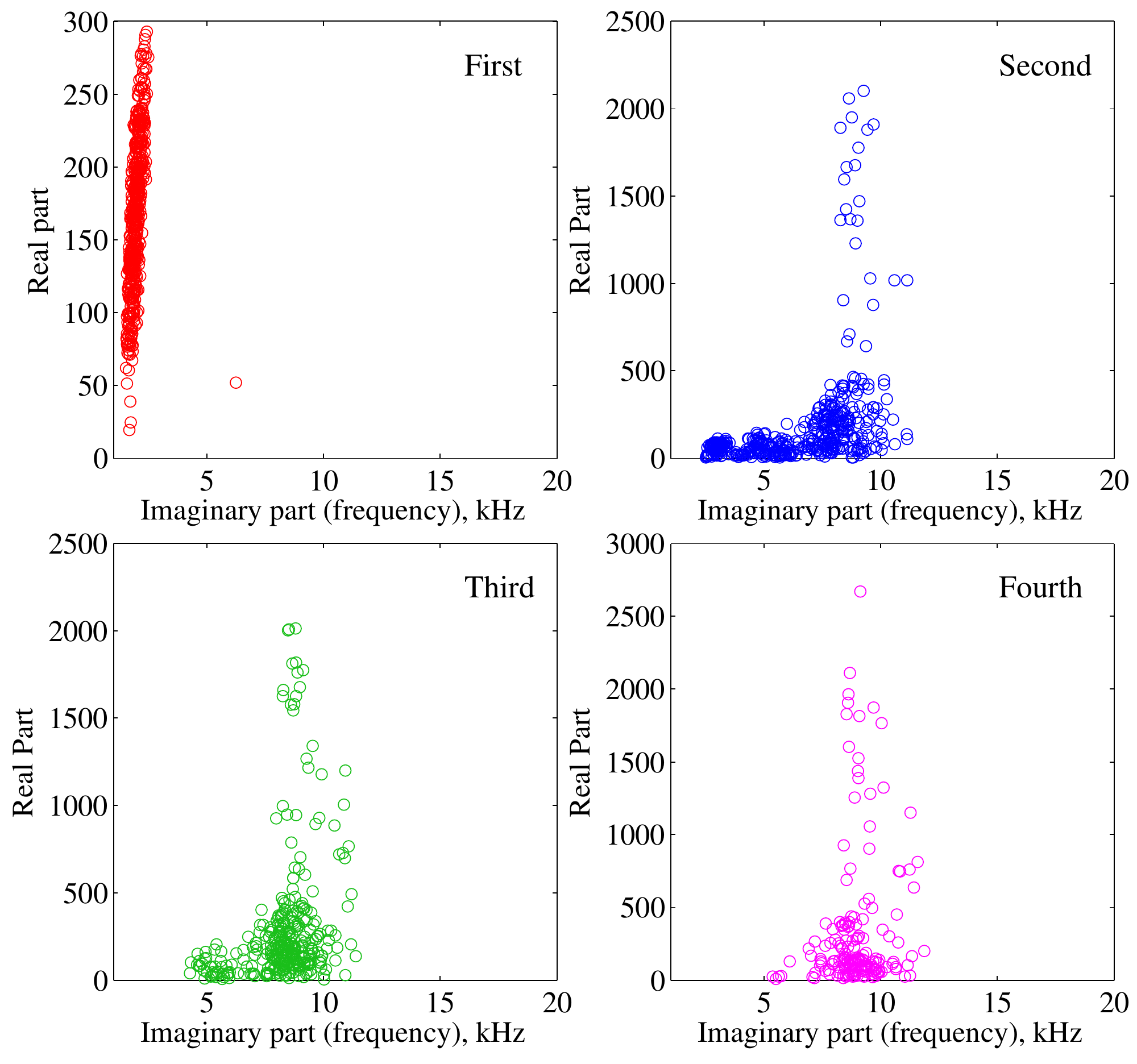}
\caption{\label{brakeFreq}Complex eigenvalues of a disk brake system for
first four unstable modes.}
\end{figure*}

\subsection{Results}

The dynamic analysis was performed in four steps. In the first step,
contact was established between the rotor and the pad by applying
brake pressure to the external surfaces of the insulators. Braking
at low velocity was simulated in the second step by imposing a rotational
velocity on the rotor, accompanied with an introduction of a non-\emph{zero}
friction coefficient between rotor and pad. In the third step, natural
frequencies up to 20 kHz were extracted by the eigenvalue extraction
procedure in the steady-state condition using the automatic multilevel
substructuring method with subspace projection in Abaqus. Finally,
in the fourth step a complex eigenvalue analysis was performed up
to the first 55 modes.

The bivariate partially adaptive-sparse PDD method with tolerances
$\epsilon_{1}=\epsilon_{2}=10^{-6}$, $\epsilon_{3}=0.9$ was applied
to determine the probabilistic characteristics of the dynamic instabilities
caused by the first two unstable modes of the disk brake system. Since
all input random variables are uniformly distributed, classical Legendre
orthonormal polynomials were used as basis functions. The PDD coefficients
were calculated using the quasi MCS with 500 samples generated from
a 16-dimensional low-discrepancy Sobol sequence. The sample size, although selected arbitrarily, is adequate, as there exist no significant changes to the coefficients, at least, for this problem. Figure \ref{brakeFreq}
displays real and imaginary parts of the eigenvalues of the first four unstable modes obtained in each quasi Monte
Carlo sample. These unstable modes, conveyed by complex frequencies
with positive real parts, reflect the dynamic instability caused in
the brake system. Each occurrence of the unstable frequency may cause
the brake to squeal.

Equations \eqref{sr12} and \eqref{sr14} were employed to calculate the
second-moment statistics of each nodal displacement component of an
eigenvector describing the associated mode shape of the disk brake
system. Based on these statistics, the $\mathcal{L}_{2}$-norms, that
is, the square root of sum of squares, of the mean and variance of
a nodal displacement were calculated. Figures \ref{brakeMoments}(a)
and \ref{brakeMoments}(b) present contour plots of the $\mathcal{L}_{2}$-norms
of the means and variances, respectively, of the first two unstable
mode shapes of the disk brake system. Similar results can be generated
for other mode shapes, stable or unstable, if desired.

For a disk brake system with complex frequencies, the $i$th effective
damping ratio is defined as $-2\mathrm{Re}\left[\lambda_{u}^{\left(i\right)}\left(\mathbf{X}\right)\right]/\mathrm{Im}|\lambda_{u}^{\left(i\right)}\left(\mathbf{X}\right)|$,
where $\mathrm{Re}\left[\lambda_{u}^{\left(i\right)}\left(\mathbf{X}\right)\right]$
and $\mathrm{Im}|\lambda_{u}^{\left(i\right)}\left(\mathbf{X}\right)|$
are the real part and the imaginary part, respectively, of the $i$th
unstable frequency $\lambda_{u}^{\left(i\right)}\left(\mathbf{X}\right)$.
The magnitude of the damping ratio represents the harshness of brake
squeal. The larger the magnitude of the damping ratio, the higher
the propensity for brake squeal. Figure \ref{brakePDF} illustrates
the marginal probability density functions of the effective damping
ratios corresponding to the first two unstable modes. These probability
densities provide a measure of the effect of random input parameters
on the dynamic instabilities caused in the disk brake system.

It is worth mentioning that a similar brake-squeal analysis with only
five input random variables was performed using a univariate RDD method
\cite{rahman07}. However, verification or improvement of the univariate solution
was not possible due to inherent limitations of the method used. The
adaptive-sparse PDD approximations developed in this work have overcome
this quandary even for a significantly more input variables.

\begin{figure*}[tbph]
\includegraphics[scale=0.85]{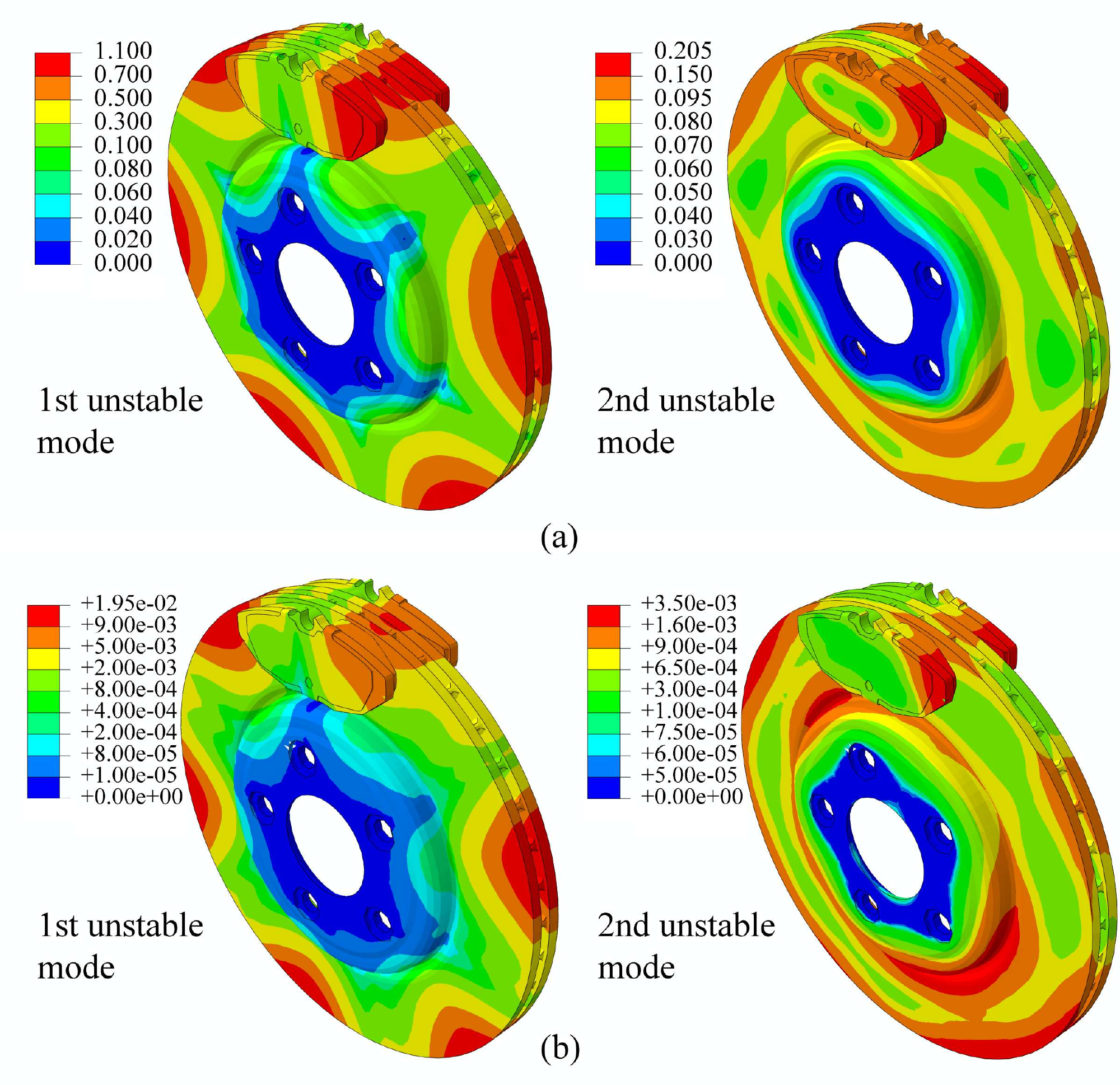}
\caption{\label{brakeMoments}Contour plots of the $\mathcal{L}_{2}$-norm
of the first two unstable mode shapes of a disk brake system by the
bivariate partially adaptive-sparse PDD method: (a) mean; (b) variance.}
\end{figure*}

\begin{figure}[tbph]
\centering{}\includegraphics[scale=0.8]{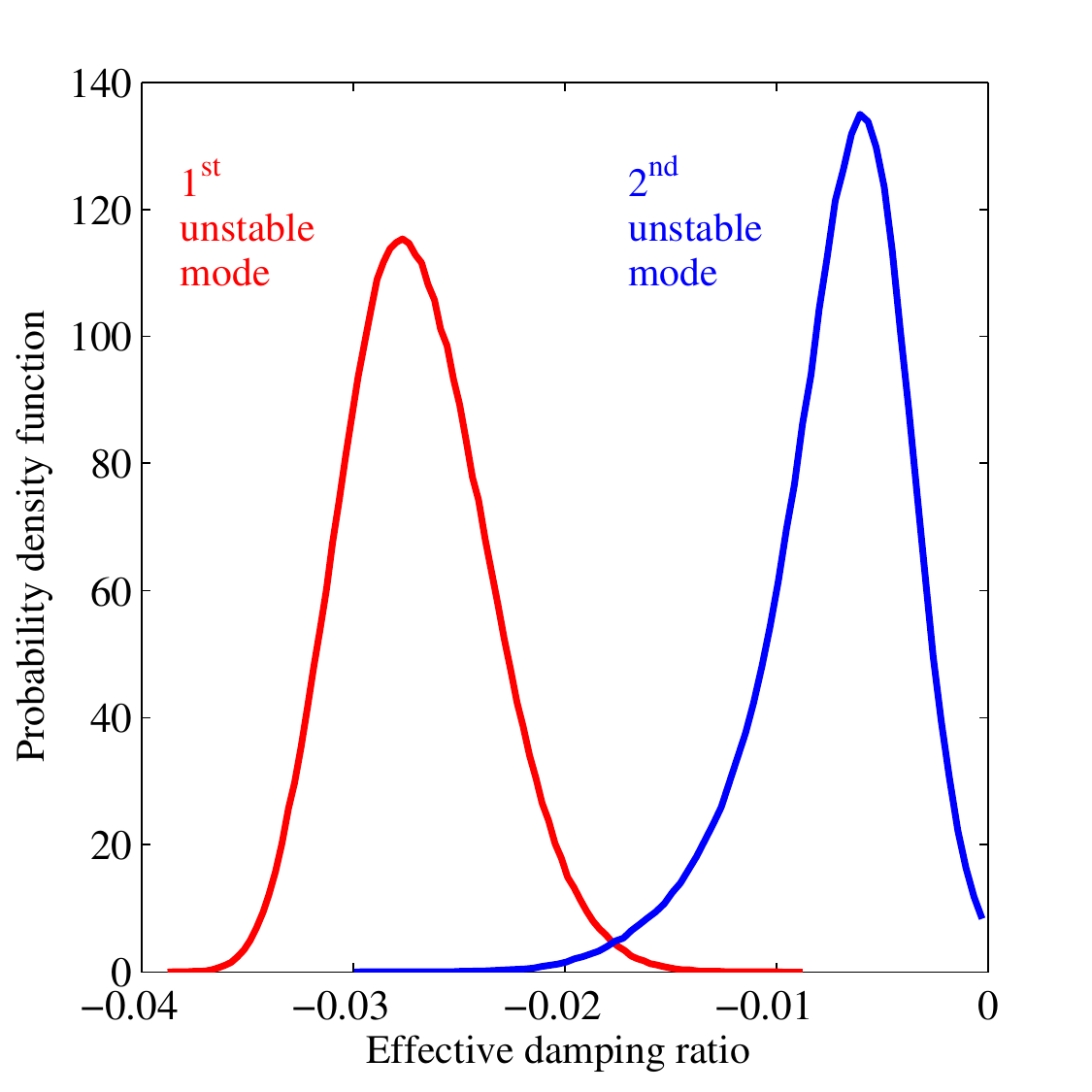}
\caption{\label{brakePDF}Marginal probability density functions of the effective
damping ratios of first two unstable modes of a disk brake system
by the bivariate partially adaptive-sparse PDD method.}
\end{figure}

\section{Conclusions}
Two new adaptive-sparse PDD methods, the fully adaptive-sparse PDD method and a partially adaptive-sparse PDD method, were developed for uncertainty
quantification of high-dimensional complex systems commonly encountered
in applied sciences and engineering. The methods are based on global
sensitivity analysis for defining the pruning criteria to retain important PDD component functions, and a full-
or sparse-grid dimension-reduction integration or quasi MCS
for estimating the PDD expansion coefficients. In the fully
adaptive-sparse PDD approximation, PDD component functions of an arbitrary
number of input variables are retained by truncating the degree of
interaction among input variables and the order of orthogonal polynomials
according to specified tolerance criteria. In a partially adaptive-sparse
PDD approximation, PDD component functions with a specified degree
of interaction are retained by truncating the order of orthogonal
polynomials, fulfilling relaxed tolerance criteria. The former approximation
is comprehensive and rigorous, leading to the second-moment statistics
of a stochastic response that converges to the exact solution when
the tolerances vanish. The latter approximation, obtained through
regulated adaptivity and sparsity, is more economical than the former
approximation and is, therefore, expected to solve practical problems
with numerous variables. A unified computational algorithm was created
for solving a general stochastic problem by the new PDD methods. Two
distinct ranking schemes $-$ full ranking and reduced ranking $-$
were also developed for grading PDD component functions in the unified
algorithm. Compared with past developments, the adaptive-sparse PDD
methods do not require truncation parameter(s) to be assigned
a priori or arbitrarily. In addition, two numerical techniques, one employing a nested sparse-grid dimension-reduction integration
and the other exploiting quasi MCS, were applied for the first time
to estimate the PDD expansion coefficients both accurately and efficiently.

The adaptive-sparse PDD methods were employed to calculate the second-moment
properties and tail probability distributions in three numerical problems,
where the output functions are either simple mathematical functions
or eigenvalues of dynamic systems, including natural frequencies of
a three-degree-of-freedom linear oscillator and an FGM plate. The
mathematical example reveals that the user-defined tolerances of an
adaptive-sparse PDD method are closely related to the relative error
in calculating the variance, thus providing an effective tool for
modulating the accuracy of the resultant approximation desired. Since
the adaptive-sparse PDD approximation retains only the component functions
with significant contributions, it is also able to achieve a desired
level of accuracy with considerably fewer coefficients than required by
existing truncated PDD approximations. The results of the linear oscillator
display a distinct advantage of the reduced ranking system over the
full ranking system, as the former requires significantly fewer expansion
coefficients to achieve results nearly identical to those of the latter. For
a required level of accuracy in calculating the tail probabilistic characteristics
of natural frequencies of an FGM plate, the new bivariate adaptive-sparse
PDD method is more economical than the existing bivariately truncated
PDD method by almost an order of magnitude. Finally, the new PDD method
was successfully applied to solve a stochastic dynamic instability
problem in a disk brake system, demonstrating the ability of the new
methods in handling industrial-scale problems.

\bibliographystyle{plain}
\bibliography{as_pdd_manuscript_arxiv}

\end{document}